  \newcommand{\uhr}{\upharpoonright}
  \newtheorem{theorem}{Theorem}[section]
  \newtheorem{corollary}[theorem]{Corollary}
  \newtheorem{proposition}[theorem]{Proposition}
  \newtheorem{question}[theorem]{Question}
  \theoremstyle{definition}
  \newtheorem{definition}[theorem]{Definition}
  \newtheorem{remark}[theorem]{Remark}
  \newtheorem{example}[theorem]{Example}
  \numberwithin{equation}{section}
  \title[Bornoligies, Topological Games and Function Spaces]
  {Bornoligies, Topological Games and\\ Function Spaces}
  \author[J. Cao]{Jiling Cao}
  \address{School of Computer and Mathematical Sciences,
  Auckland University of Technology, Private Bag 92006, Auckland
  1142, New Zealand}
  \email{jiling.cao@aut.ac.nz}
  \author[A.~H. Tomita]{Artur H. Tomita}
  \address{Departamento de Matem\'atica, Instituto de
  Matem\'atica e Estat\'{i}stica, Universidade de S\~ao Paulo,
  Rua do Mat\~ao 1010, Cep 05508-090, S\~ao Paulo, Brasil}
  \email{tomita@ime.usp.br}
  \thanks{\hspace{-1.66em} 2010 \emph{Mathematics Subject
  Classification.}
  Primary 54C35; Secondary 46A17, 54E50, 54E52.}
  \thanks{\noindent \emph{Keywords}. Baire, Bornology, Completely metrizable, Monotonically $p$-space, $q_D$-point, Strongly Baire, Strong uniform convergence.}
  \date{}
  \dedicatory{}
\begin{document}

  \begin{abstract}
  In this paper, we continue the study of function spaces equipped
  with topologies of (strong) uniform convergence on bornologies
  initiated by Beer and Levi \cite{beer-levi:09}. In particular,
  we investigate some topological properties these function spaces
  defined by topological games. In addition, we also give further
  characterizations of metrizability and completeness properties
  of these function spaces.
  \end{abstract}

  \maketitle

  \section{Introduction} \label{sec:intro}

  A bornology $\mathscr B$ on a (nonempty) set $X$ is a family of
  nonempty subsets of $X$ that is closed under taking finite unions,
  that is closed under taking subsets, and that forms a cover of
  $X$. In his pioneering work \cite{hu:49}, Hu investigated
  bornologies defined on metrizable spaces that correspond to that
  of bounded sets with respect to an admissible metric. He was the
  first to build a framework to discuss macroscopic phenomena and
  their interplay with the local structures investigated in general
  topology. In the literature, bornologies have been widely applied
  in the theory of locally convex spaces \cite{hogbe:77}, where
  additional conditions are required, e.g., that the bornology be
  closed under vector addition and scalar multiplication, and
  perhaps that the balanced hull of $B\in {\mathscr B}$ remains
  in $\mathscr B$. Furthermore, over the past 15 years, Borwein
  et al. \cite{borwein:93} have used bornologies to develop a
  unified theory of differentiability for functions defined on
  normed spaces, characterizing various geometric properties
  of spaces in terms of the relationship between different
  bornological derivatives on various classes of functions.

  \medskip
  Recently, there has been renewed interest in bornologies in
  analysis and topology. On one hand, Beer and Levi
  \cite{beer-levi:09b} investigated some basic questions
  about the totally bounded subsets induced by a bornology
  $\mathscr B$ on a metric space. On the other hand, Beer and
  Levi \cite{beer-levi:09} applied bornologies to study
  function spaces with topologies of uniform convergence on
  bornologies in a unified approach. In the same paper, they
  also introduced topologies of strong uniform convergence
  on bornologies. This study has been carried on by Caserta
  et al. \cite{caserta-dimaio-hola:10} and Hol\'{a}
  \cite{hola:12}. In \cite{caserta-dimaio-hola:10}, the authors
  characterized several topological properties of the topology
  of strong uniform convergence on a bornology, including
  separability, second countability, countable tightness, etc.
  In \cite{hola:12}, Hol\'{a} characterized complete
  metrizability of topologies of strong uniform convergence
  on bornologies. The purpose of this article is to continue
  the study of topological properties of function space
  equipped with topologies of (strong) uniform convergence on
  bornologies. In particular, we investigate those properties
  defined in terms of topological games.

  \medskip
  Given a topological space $X$ and a metric space
  $(Y,\rho)$, let $C(X,Y)$ be the set of continuous functions
  from $X$ to $(Y, \rho)$. For $(Y,\rho) = ({\mathbb R}, \mbox{
  Euclidean metric})$, $C(X,Y)$ is simply denoted by $C(X)$.
  The family of non-empty finite subsets (resp. subsets with
  compact closure) of $X$ is denoted by ${\mathscr F}(X)$ (resp.
  ${\mathscr K}(X)$). In addition, the family of non-empty
  subsets of $X$ is denoted by ${\mathscr P}(X)$. Two nonempty
  subsets $A$ and $B$ of $(Y,\rho)$ are said to be \emph{far
  from each other} if
  \[
  D(A,B):= \inf \{\rho(a,b): a\in A \mbox{ and } b\in B\}
  >0.
  \]
  Given $\delta >0$, the open $\delta$-ball centered at $a\in Y$
  is denoted by $S(a, \delta)$, and $A^\delta := \bigcup_{a\in A}
  S(a,\delta)$ is called the \emph{$\delta$-enlargement} of the
  subset $A$ of $(Y,\rho)$. If $\mathscr A$ is a family of
  nonempty subsets of $(Y,\rho)$, then ${\mathscr A}^\delta :=
  \{A^\delta: A \in {\mathscr A} \}$. Given a function $f\in C(X,Y)$
  and a net $\langle f_\alpha: \alpha \in D \rangle$ pointwise
  convergent to $f$, the family of subsets of $X$ on which the
  convergence is uniform is easily seen to be a bornology with
  closed base. Here, a subfamily $\mathscr S$ of a bornology
  $\mathscr B$ is a \emph{base} of $\mathscr B$ if for each member
  $B \in \mathscr B$ there exists an $A \in \mathscr S$ such that
  $B \subseteq A$. If each member of a base $\mathscr S$ is a
  closed subset, then $\mathscr S$ is called a \emph{closed base}
  of $\mathscr B$. Bearing this in mind, suppose that
  $\mathscr B$ is a bornology with closed base on $X$. The
  uniformity $\Delta_{\mathscr B}$ for the topology ${\mathscr
  T}_{\mathscr B}$ of uniform convergence on $\mathscr B$ for
  $C(X,Y)$ has as a base for its encourages all sets of the form
  \[
  [B; \varepsilon] :=\{(f,g): \rho(f(x),
  g(x)) <\varepsilon \mbox{ for all } x\in B\},
  \]
  where $B\in {\mathscr B}$ and $\varepsilon >0$. When ${\mathscr B}
  ={\mathscr F}(X)$, we get the standard uniformity for the topology
  of pointwise convergence. The topology ${\mathscr T}_{{\mathscr
  F}(X)}$ is simply denoted by ${\mathscr T}_p$, and the space
  $(C(X, Y), {\mathscr T}_{{\mathscr F}(X)})$ is simply denoted by
  $C_p(X,Y)$. When ${\mathscr B} = {\mathscr K}(X)$, we get the
  standard uniformity for the topology of uniform convergence on
  compacta. The topology ${\mathscr T}_{{\mathscr K}(X)}$ is simply
  denoted by ${\mathscr T}_k$, and the space $(C(X, Y), {\mathscr
  T}_{{\mathscr K}(X)})$ is simply denoted by $C_k(X,Y)$. Finally,
  when ${\mathscr B} ={\mathscr P}(X)$, we get the standard
  uniformity for the topology of uniform convergence on $X$.

  \begin{definition}
  Let $(X,d)$ and $(Y,\rho)$ be two metric spaces, and let $\mathscr
  B$ be a bornology with a closed base on $X$. The \emph{topology
  of strong uniform convergence} on $\mathscr B$ for $C(X,Y)$,
  denoted by ${\mathscr T}_{\mathscr B}^s$, is determined by a
  uniformity $\Delta_{\mathscr B}^s$, which has as a base for its
  entourages all sets of the form
  \[
  [B; \varepsilon]^s :=\{(f,g): \mbox{ there is } \delta >0 \mbox{
  such that } \rho(f(x),g(x)) <\varepsilon \mbox{ for all } x\in
  B^\delta\},
  \]
  where $B\in {\mathscr B}$ and $\varepsilon >0$.
  \end{definition}

  Note that ${\mathscr T}_p \subseteq {\mathscr T}_{\mathscr B}$
  on $C(X,Y)$ for any bornology $\mathscr B$ on $X$. Thus, $(C(X),
  {\mathscr T}_{\mathscr B})$ and $(C(X),{\mathscr T}_{\mathscr
  B}^s)$ are Tychonoff topological groups. Since $[B;\varepsilon]^s
  \subseteq [B;\varepsilon]$ for all $\mathscr B$ and
  $\varepsilon >0$, ${\mathscr T}_{\mathscr B}^s$ is always finer
  than ${\mathscr T}_{\mathscr B}$ on $C(X,Y)$. In
  general, these two topologies are distinct on $C(X,Y)$. Several
  conditions under which ${\mathscr T}_{\mathscr B}^s= {\mathscr
  T}_{\mathscr B}$ on $C(X,Y)$ can be found in the literature,
  e.g., in \cite{beer-costantini:13}, \cite{beer-levi:09} and \cite{caserta-dimaio-hola:10}.

  \medskip
  The paper is organized as follows. In Section \ref{sec:games}, we
  introduce and study several classes of topological spaces defined
  by topological games. These classes of spaces are close to the
  spaces with certain completeness properties. In Section
  \ref{sec:metrizable}, we consider the metrizability of topologies
  ${\mathscr T}_{\mathscr B}$ and ${\mathscr T}_{\mathscr B}^s$
  on $C(X,Y)$. Further characterizations of these topologies in
  terms of $q_D$-points are given. In Section \ref{sec:complete},
  we apply results in Section \ref{sec:metrizable} to study
  completeness properties of topologies ${\mathscr T}_{\mathscr B}$
  and ${\mathscr T}_{\mathscr B}^s$ on $C(X,Y)$. The last section
  is devoted to the study of the Baire property of function spaces
  defined by bornologies. We give some necessary conditions under
  which a function space equipped with the topology of strong
  uniform topology on a bornology to be Baire. The paper also
  contains several open questions. All undefined concepts can be
  found in the listed references, e.g., \cite{en:89}, \cite{Gr:84}
  or \cite{mccoy-natantu:88}.

  \section{Topological spaces defined by games}
  \label{sec:games}
  Let $T$ be a topological space. Recall that the \emph{Choquet
  game} $Ch(T)$ played in $T$ is the following two-player infinite
  game. Players, called $\beta$ (the first player) and $\alpha$
  (the second player), alternatively choose nonempty open subsets
  of $X$ with $\beta$ starting first such that $U_1 \supseteq V_1
  \supseteq U_2 \supseteq V_2 \supseteq \cdots$.
  In this way, a run $\langle (U_n, V_n): n\ge 1\rangle$ will be
  produced. Then $\alpha$ is said to {\em win\/} this run if
  $\bigcap_{n \ge 1} U_n (=\bigcap_{n\ge 1} V_n)\not= \emptyset$.
  Otherwise, we say that $\beta$ has won. By a \emph{strategy}
  $\sigma$ for player $\beta$, we mean a function whose domain
  is the collection of all legal finite sequences of moves of
  $\alpha$ and whose values are nonempty open sets of $X$. If
  $\sigma$ is a strategy for $\beta$ in $Ch(T)$, $\sigma(\emptyset)$
  denotes the first move of $\beta$. A finite sequence $\langle
  V_1,...,V_n\rangle$ of nonempty open sets of $X$ is called a
  \emph{partial play} of $\alpha$ subject to $\sigma$ in $Ch(X)$
  if $V_1\subseteq \sigma (\emptyset)$ and $V_{i+1} \subseteq
  \sigma (V_1,...,V_i) \subseteq V_i$ for all $1\le i <n$. Similarly,
  an infinite sequence $\langle V_n: n\ge 1\rangle$ of nonempty
  open sets of $X$ is called a \emph{(full) play} of $\alpha$
  subject to $\sigma$ if $V_1 \subseteq \sigma (\emptyset)$ and
  $V_{n+1} \subseteq \sigma (V_1,...,V_n) \subseteq V_n$ for all
  $n \ge 1$. Strategies for player $\alpha$, partial plays and
  (full) plays for $\beta$ subject to a strategy of $\alpha$ can
  be defined similarly. In addition, a \emph{winning strategy}
  for a player is a strategy such that this player wins each
  play of its opponent subject to this strategy no matter how
  the opponent moves in the game. If $\alpha$ has a winning
  strategy in $Ch(X)$, then $X$ is called \emph{weakly
  $\alpha$-favorable}. For more information on topological
  games, see \cite{cao-moors:06}.
  
  \medskip
  Recall that $T$ is \textit{Baire} if the intersection of every
  sequence of dense open subsets in $T$ is dense. The well-known
  Baire Category Theorem in analysis and topology claims that every
  complete metric or locally compact Hausdorff space is Baire. A
  Tychonoff space $T$ is \emph{\v{C}ech complete} if it is a
  $G_\delta$-set in its \v{C}ech-Stone compactification $\beta T$.
  Since the class of \v{C}ech complete spaces contains all
  complete metric spaces and locally compact topological spaces,
  a unified version of the Baire Category Theorem is: \emph{Each
  \v{C}ech complete space is Baire}. An interesting connection
  between the Choquet game and the class of Baire spaces is given
  in the following theorem of Oxtoby in 1957.

  \begin{theorem}[\cite{oxtoby:57}]\label{thm:krom}
  A space $X$ is Baire if and only if the first player does not
  have a winning strategy in the Choquet game played in $X$.
  \end{theorem}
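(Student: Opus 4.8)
The plan is to prove the two implications of the biconditional separately, each in contrapositive form. For the direction ``$X$ not Baire $\Rightarrow$ the first player $\beta$ has a winning strategy'', I would fix a sequence $\langle D_n:n\ge 1\rangle$ of dense open subsets of $X$ whose intersection fails to be dense, together with a nonempty open set $W$ disjoint from $\bigcap_n D_n$. Let $\beta$ open with $U_1:=W\cap D_1$ and, after any legal response $V_n\subseteq U_n$ of $\alpha$, let $\beta$ play $U_{n+1}:=V_n\cap D_{n+1}$. Density of each $D_n$ guarantees that every $U_n$ is a nonempty open set, so this is a legal strategy for $\beta$; and since $U_n\subseteq D_n$ and $U_n\subseteq W$ for every $n$, every resulting run satisfies $\bigcap_n U_n\subseteq W\cap\bigcap_n D_n=\emptyset$. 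Thus $\beta$ wins every run, that is, $\beta$ has a winning strategy.

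For the converse, suppose $\sigma$ is a winning strategy for $\beta$; the goal is to produce dense open subsets of $X$ with non-dense intersection. Using $\sigma$, I would build by recursion on length a tree $\mathcal{T}$ of partial plays $p=(V_1,\dots,V_n)$ of $\alpha$ subject to $\sigma$, writing $\sigma(p)$ for $\sigma(V_1,\dots,V_n)$. Put $\emptyset\in\mathcal{T}$; and for each $p\in\mathcal{T}$, let the immediate successors of $p$ in $\mathcal{T}$ form a family of one-step extensions $p^{\frown}\langle V\rangle$, with $V$ a nonempty open subset of $\sigma(p)$, chosen --- via Zorn's Lemma --- to be maximal subject to the requirement that the open sets $\{\sigma(q):q\ \text{an immediate successor of}\ p\}$ be pairwise disjoint. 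Setting $G_n:=\bigcup\{\sigma(p):p\in\mathcal{T},\ |p|=n\}$, a routine induction shows that the sets $\sigma(p)$ with $|p|=n$ are pairwise disjoint, that $G_{n+1}\subseteq G_n$, and that $G_{n+1}$ is dense in $G_n$: given a nonempty open $U\subseteq\sigma(p)$ with $|p|=n$, the extension $p^{\frown}\langle U\rangle$ is legal and $\sigma(p^{\frown}\langle U\rangle)$ is a nonempty open subset of $U$, so maximality of the successor family forces $U$ to meet some $\sigma(q)$. Hence every $G_n$ is open and dense in the nonempty open set $G_0=\sigma(\emptyset)$.

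The crux is then to show $\bigcap_n G_n=\emptyset$. If some $x$ belonged to every $G_n$, the level-wise disjointness would determine a unique coherent branch $\emptyset=p_0\subseteq p_1\subseteq p_2\subseteq\cdots$ through $\mathcal{T}$ with $x\in\sigma(p_n)$ for all $n$; the corresponding infinite sequence of $\alpha$-moves would then be a full play against $\sigma$ whose $\beta$-moves are precisely the sets $\sigma(p_n)$, all of which contain $x$. Hence $\alpha$ would win this run, contradicting that $\sigma$ is a winning strategy for $\beta$. To transfer the conclusion from $G_0=\sigma(\emptyset)$ to $X$ itself, put $\widetilde{G}_n:=G_n\cup(X\setminus\overline{G_0})$: each $\widetilde{G}_n$ is open, and is dense in $X$ because its closure contains both $\overline{G_0}$ (as $G_n$ is dense in $G_0$) and $X\setminus\overline{G_0}$; and since $\bigcap_n G_n=\emptyset$ we have $\bigcap_n\widetilde{G}_n=X\setminus\overline{G_0}$, whose closure $X\setminus\operatorname{int}\overline{G_0}$ is a proper subset of $X$ because the nonempty open set $G_0$ lies in $\operatorname{int}\overline{G_0}$. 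Therefore $X$ is not Baire, which completes the converse.

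I expect the converse to be the main obstacle, and within it the delicate step is ensuring that density propagates downward through the tree. The naive choice of successors of $p$ --- a maximal pairwise-disjoint family of nonempty open subsets $V$ of $\sigma(p)$ --- does not work, because $\sigma$ may answer a large move $V$ with a small open set $\sigma(p^{\frown}\langle V\rangle)$, so that $G_{n+1}$ need not be dense in $G_n$. The remedy adopted above is to impose the disjointness and maximality on the $\sigma$-responses $\sigma(p^{\frown}\langle V\rangle)$ rather than on the moves $V$ themselves. The only other wrinkle is that $\sigma(\emptyset)$ need not be all of $X$, which is absorbed by enlarging the $G_n$ to the $\widetilde{G}_n$.
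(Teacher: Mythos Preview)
Your argument is correct. Note, however, that the paper does not actually prove Theorem~\ref{thm:krom}: it is stated as a classical result of Oxtoby, with the remark that proofs can also be found in \cite{krom:74} and \cite{saint-raymond:83}. So there is no ``paper's own proof'' to compare against.

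That said, your approach is precisely the standard one appearing in those references (and in later expositions such as Kechris's text): the easy direction builds $\beta$'s winning strategy by intersecting with a fixed sequence of dense open sets whose intersection misses a nonempty open $W$; the harder direction unfolds a winning strategy $\sigma$ for $\beta$ into a tree of partial plays, using a maximal-disjoint choice of $\sigma$-responses at each node to force the level sets $G_n$ to be dense in $G_0=\sigma(\emptyset)$, and then invoking the winning condition for $\sigma$ to conclude $\bigcap_n G_n=\emptyset$. Your explicit remark that maximality must be imposed on the $\sigma$-images $\sigma(p^{\frown}\langle V\rangle)$ rather than on the moves $V$ themselves is exactly the point where careless write-ups go wrong, and your handling of the passage from $G_0$ to $X$ via $\widetilde{G}_n=G_n\cup(X\setminus\overline{G_0})$ is clean. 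The only cosmetic gap is that when you argue density of $G_{n+1}$ in $G_n$ you tacitly reduce an arbitrary nonempty open subset of $G_n$ to one contained in a single $\sigma(p)$; this is immediate from the level-$n$ disjointness you have already established, but is worth a half-sentence.
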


  A proof of Theorem \ref{thm:krom} can also be found in
  \cite{krom:74} and \cite{saint-raymond:83}. It follows
  from Theorem \ref{thm:krom}  that every weakly $\alpha$-favorable
  space is Baire. Let $D \subseteq T$ be a dense subset of $T$.
  The game ${\mathscr G}_S(D)$, involves two players $\alpha$ and
  $\beta$. Players $\beta$ and $\alpha$ choose alternately non-empty
  open subsets $V_n$ and $U_n$ in $X$ just as in the Choquet game.
  Player $\alpha$ \emph{wins} a run if $\bigcap_{n\ge 1} U_n$
  is non-empty and each sequence $\langle x_n: n \ge 1\rangle$ with
  $x_n \in U_n \cap D$ for all $n\ge 1$ has a cluster point in $T$.
  The space $T$ is \emph{strongly Baire} \cite{KKM:01} if $T$ is
  regular and there is a dense subset $D \subset T$ such that
  $\beta$ does not have a winning strategy in the game ${\mathscr
  G}_S(D)$ played in $T$.

  \medskip
  A family of subsets of $T$ is said to be an \emph{almost cover}
  if the union of its members is dense in $T$. A {\em sieve\/}
  (resp. An \emph{almost sieve}) $\langle \{ U_i: i \in I_n \},
  \pi_n: n \ge 1\rangle$ on $T$ is a sequence of indexed covers
  (resp. almost covers) $\{ U_i: i \in I_n \}$ of $X$, together
  with maps $\pi_n: I_{n+1}\to I_n$ such that $U_i =X$ for all $i
  \in I_0$ and $U_i=\bigcup \{ U_j: j\in \pi_n^{-1}(i)\}$ (resp.
  $\bigcup \{ U_j: j\in \pi_n^{-1}(i)\}$ is dense in $U_i$) for all
  $i\in I_n$ and all $n\ge 1$. Moreover, a {\em $\pi$-chain\/} for
  such a sieve or an almost sieve is a sequence $\langle i_n: n
  \ge 1\rangle$ such that $i_n \in I_n$ and $\pi_n(i_{n+1})=i_n$
  for all $n \ge 1$. A filterbase ${\mathscr F}$ on $T$ is said to be
  \emph{controlled} by a sequence $\langle U_n: n \ge 1\rangle$ of
  subsets of $X$ if each $U_n$ contains some $F\in {\mathscr F}$.
  If each filterbase controlled by $\langle U_n: n \ge 1 \rangle$
  clusters, then $\langle U_n: n \ge 1 \rangle$ is called a
  \emph{complete sequence} on $T$. Furthermore, $T$ is called
  \emph{sieve complete} (resp. \emph{almost complete}) \cite{CCN:74},
  \cite{michael:91} if there exists an (resp. almost) open sieve
  such that $\langle U_{i_n}: n \ge 1 \rangle$ is a complete
  sequence for each $\pi$-chain $\langle i_n: n \ge 1  \rangle$,
  and $T$ is called an \emph{almost monotonically $p$-space} if
  $T$ has an almost open sieve such that $\langle U_{i_n}: n \ge
  1 \rangle$ is a complete sequence for each $\pi$-chain $\langle
  i_n: n\ge 1 \rangle$ with $\bigcap_{n \ge 1} U_{i_n}\ne \emptyset$.

  \medskip
  All \v{C}ech complete spaces are sieve complete, all sieve
  complete spaces are almost complete, all almost complete spaces
  are strongly Baire, and all strongly Baire spaces are Baire.
  Each almost complete Tychonoff space has a \v{C}ech complete
  dense $G_\delta$-subspace. In \cite{KKM:01}, some conditions
  under which a Baire space is strongly Baire were given. Below,
  we give some more conditions.

  \begin{theorem}
  Let $T$ be a regular space. If $T$ is a Baire and almost
  monotonically $p$-space, then $T$ is a strongly Baire space.
  \end{theorem}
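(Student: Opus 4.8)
The plan is to combine Oxtoby's theorem (Theorem~\ref{thm:krom}) with an almost open sieve witnessing that $T$ is an almost monotonically $p$-space. Since $T$ is Baire, the first player $\beta$ has no winning strategy in the Choquet game $Ch(T)$; to conclude that $T$ is strongly Baire it suffices to show that $\beta$ has no winning strategy in the game $\mathscr{G}_S(D)$ for the dense set $D:=T$ (recall that $T$ is regular by hypothesis). So fix an arbitrary strategy $\sigma$ for $\beta$ in $\mathscr{G}_S(T)$; I will produce a single run consistent with $\sigma$ and won by $\alpha$, by ``running $\sigma$'' inside a run of $Ch(T)$ that simultaneously descends one $\pi$-chain of the sieve.

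Fix an almost open sieve $\langle\{U_i:i\in I_n\},\pi_n:n\ge1\rangle$ on $T$ for which $\langle U_{i_n}:n\ge1\rangle$ is a complete sequence whenever $\langle i_n:n\ge1\rangle$ is a $\pi$-chain with $\bigcap_{n\ge1}U_{i_n}\ne\emptyset$. I will define a strategy $\tau$ for $\beta$ in $Ch(T)$ recursively. For the opening move, set $V_1:=\sigma(\emptyset)$; since $\{U_i:i\in I_1\}$ is an almost cover, choose $i_1\in I_1$ with $V_1\cap U_{i_1}\ne\emptyset$ and let $\tau(\emptyset):=P_1:=V_1\cap U_{i_1}$. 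If $\alpha$ has replied with nonempty open $Q_1,\dots,Q_n$, and indices $i_1,\dots,i_n$ have been chosen with $\tau$'s $n$th move $P_n=V_n\cap U_{i_n}$ where $V_n:=\sigma(Q_1,\dots,Q_{n-1})$, then put $V_{n+1}:=\sigma(Q_1,\dots,Q_n)$; since $V_{n+1}\subseteq Q_n\subseteq P_n\subseteq U_{i_n}$ and $\bigcup\{U_j:j\in\pi_n^{-1}(i_n)\}$ is dense in $U_{i_n}$, pick $i_{n+1}\in\pi_n^{-1}(i_n)$ with $V_{n+1}\cap U_{i_{n+1}}\ne\emptyset$ and let $\tau(Q_1,\dots,Q_n):=P_{n+1}:=V_{n+1}\cap U_{i_{n+1}}$. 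Choosing the indices coherently along each branch makes $\tau$ a genuine strategy for $\beta$ in $Ch(T)$, and along any run of $Ch(T)$ consistent with $\tau$ the chosen indices form a $\pi$-chain $\langle i_n:n\ge1\rangle$ (as $\pi_n(i_{n+1})=i_n$) with $P_n\subseteq U_{i_n}$ and $P_n\subseteq V_n=\sigma(Q_1,\dots,Q_{n-1})$ for every $n$.

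Because $T$ is Baire, Theorem~\ref{thm:krom} yields a run $\langle(P_n,Q_n):n\ge1\rangle$ of $Ch(T)$ consistent with $\tau$ with $\bigcap_{n\ge1}P_n\ne\emptyset$. Transcribing the moves, $\langle(V_n,Q_n):n\ge1\rangle$ is a run of $\mathscr{G}_S(T)$ consistent with $\sigma$ whose $\alpha$-moves are the $Q_n$ (the $\beta$-moves $V_n$ are exactly what $\sigma$ dictates, and $Q_n\subseteq P_n\subseteq V_n$, $V_{n+1}\subseteq Q_n$). Since $P_1\supseteq Q_1\supseteq P_2\supseteq Q_2\supseteq\cdots$, we get $\bigcap_nQ_n=\bigcap_nP_n\ne\emptyset$. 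Also $\emptyset\ne\bigcap_nP_n\subseteq\bigcap_nU_{i_n}$, so the $\pi$-chain $\langle i_n\rangle$ meets the hypothesis of an almost monotonically $p$-space and $\langle U_{i_n}:n\ge1\rangle$ is a complete sequence. Finally, if $\langle x_n\rangle$ is any sequence with $x_n\in Q_n$, then for each $m\ge1$ we have $\{x_k:k\ge m\}\subseteq P_m\subseteq U_{i_m}$ (the $P_k$ decrease and $Q_k\subseteq P_k$), so the filterbase of tails $\{x_k:k\ge m\}$, $m\ge1$, is controlled by $\langle U_{i_m}:m\ge1\rangle$ and hence clusters; thus $\langle x_n\rangle$ has a cluster point in $T$. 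Therefore $\alpha$ wins the run $\langle(V_n,Q_n):n\ge1\rangle$ of $\mathscr{G}_S(T)$, so $\sigma$ is not winning. Since $\sigma$ was arbitrary, $\beta$ has no winning strategy in $\mathscr{G}_S(T)$, and $T$ (being regular) is strongly Baire.

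The heart of the argument is the coupling of the two games along a single run: the almost-cover and density clauses of the sieve are what let $\beta$ keep steering the Choquet run into a fixed $\pi$-chain even though $\sigma$'s next move is known only one step in advance, while Baireness is precisely what produces $\bigcap_nP_n\ne\emptyset$, and this nonempty intersection does double duty --- it makes the weak ``$\bigcap U_{i_n}\ne\emptyset$'' clause of an almost monotonically $p$-space applicable, and it confines every $\alpha$-tail to a single $U_{i_m}$. Regularity is needed only to invoke the definition of ``strongly Baire'' at the end. The one piece of bookkeeping I would write out carefully is that $\tau$ is defined exactly on the legal partial plays and that the index choices along a branch are mutually consistent; the rest is routine.
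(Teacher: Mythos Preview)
Your proof is correct and follows essentially the same approach as the paper's: couple the given strategy $\sigma$ with a descent along a $\pi$-chain of the almost open sieve to build a strategy for $\beta$ in $Ch(T)$, defeat it via Baireness, and transcribe back to obtain a run of $\mathscr{G}_S(D)$ won by $\alpha$. The only difference is cosmetic: you take $D=T$, whereas the paper sets $D=\bigcap_{n\ge1}\bigcup_{i\in I_n}U_i$; since in both arguments the clustering of $\langle x_n\rangle$ is deduced from $x_n\in V_n\subseteq U_{i_n}$ (not from membership in $D$), your simpler choice works just as well and in fact yields the nominally stronger conclusion.
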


  \begin{proof}
  Let $\langle \{ U_i: i \in I_n \},\pi_n: n \ge 1\rangle$ be an
  almost open sieve on $T$ to witness $T$ being an almost
  monotonically $p$-space. For each $n\ge 1$, put $G_n =
  \bigcup_{i\in I_n} U_i$. By definition, each $G_n$ is a dense
  open subset of $T$. Since $T$ is a Baire space, then $D=
  \bigcap_{n\ge 1} G_n$ is also a dense subset of $T$.

  \medskip
  We shall show that any strategy $\sigma$ for player $\beta$ in
  the game ${\mathscr G}_S(D)$ is not a winning strategy for $\beta$.
  To this end, we need to define inductively a strategy $\theta$ for
  $\beta$ in $Ch(T)$ by applying $\sigma$ and then applying the
  Baireness of $T$. First, we select an $i_1 \in I_1$ such that
  $\sigma(\emptyset) \cap U_{i_1}\ne \emptyset$, and put $\theta(\emptyset) =\sigma(\emptyset) \cap U_{i_1}$. Let $V_1
  \subseteq \theta(\emptyset)$ be $\alpha's$ first move in $Ch(T)$.
  Now, we choose $i_2 \in I_2$ such that $i_1 = \pi_1(i_2)$ and $\sigma(V_1) \cap U_{i_2} \ne \emptyset$. Define $\theta(V_1) = \sigma(V_1) \cap U_{i_2}$.
  Assume that for each $n> 1$, we have defined $\theta$ for any
  finite $\theta$-sequence $\langle V_1,...,V_n\rangle$ satisfying\\
  (1) $\theta(V_1,...,V_k)=\sigma(V_1,...,V_k)\cap
  U_{i_k}$ for all $k\le n$,\\
  (2) $i_k \in I_k$ for all $k\le n$,\\
  (3) $i_{k-1} = \pi_k(i_n)$ for all $2\le k\le n$.\\
  Note that any $\theta$-sequence of length $k \le n$ in $Ch(T)$
  is also a $\sigma$-sequence of the same length in ${\mathscr
  G}_S(D)$. Let $\langle V_1,...,V_n\rangle$ be any finite
  sequence of nonempty open subsets satisfying the conditions
  (1)--(3). Suppose that $V_{n+1} \subseteq \theta(V_1,...,V_n)$
  is $\alpha$'s  $(n+1)$-th move in $Ch(T)$. We choose $i_{n+1}
  \in I_{n+1}$ such that $\pi_{n+1}(i_{n+1})=i_n$ and $\sigma(V_1,
  ...,V_n)\cap U_{i_{n+1}}\ne\emptyset$, and then define
  \[
  \theta(V_1,
  ...,V_n, V_{n+1}) = \sigma(V_1,...,V_n) \cap U_{i_{n+1}}.
  \]
  This completes the definition of $\theta$ inductively.

  \medskip
  Since $T$ is a Baire space, by Theorem \ref{thm:krom}, $\theta$
  is not a winning strategy for $\beta$ in $Ch(T)$. It follows
  that there is an infinite $\theta$-sequence $\langle V_n: n\ge
  1 \rangle$ of $\alpha$'s moves such that $\bigcap_{n\ge 1} V_n
  \ne \emptyset$.
  Now, let $\langle x_n: n\ge 1 \rangle$ be a sequence in $T$
  with $x_n \in V_n \cap D$ for each $n\ge 1$. Then, by the
  construction of $\theta$, we see that $x_n \in U_{i_n}$ for
  all $n\ge 1$. Note that $\langle i_n: n\ge 1 \rangle$ is a
  $\pi$-sequence. Since $\bigcap_{n\ge 1} U_{i_n}\ne\emptyset$,
  then $\langle U_{i_n}: n \ge 1 \rangle$ must be a complete
  sequence. For each $n \ge 1$, put $F_n =\{x_k: k \ge n\}$.
  Then the filterbase $\{F_n: n\ge 1 \}$ is controlled by
  $\langle U_{i_n}: n \ge 1 \rangle$. This implies that
  $\{F_n: n\ge 1 \}$ has a cluster point in $T$, which is
  equivalent to that $\{x_n: n\ge 1\}$ has a cluster point in $T$.
  Note that $\langle V_n: n\ge 1 \rangle$ is an infinite
  $\sigma$-sequence in the game ${\mathscr G}_S(D)$ to witness
  that $\sigma$ is not a winning strategy for $\beta$. Thus,
  $T$ is a strongly Baire space.
  \end{proof}

  \begin{remark}
  The question that how strongly Baire spaces can be identified
  was considered firstly by Kenderov et al. in \cite{KKM:01} and
  more recently by Arhangel'skii et al. in
  \cite{shua-coban-petar:10}. Let $T$ and $Z$ be two topological
  spaces with $T \subseteq Z$. The space $T$ is said to have
  \emph{star separation in $Z$}, if there exists a sequence $\langle
  {\mathscr G}_n: n \ge 1\rangle$ of families of open subsets
  of $Z$ such that for every pair of points $x\in T$ and $z\in Z
  \setminus T$, there exists some $n\ge 1$ such that at least
  one of stars ${\rm St}(x, {\mathscr G}_n)$ and ${\rm St}(z,
  {\mathscr G}_n)$ is not empty and contains only one of the two
  points. In particular, $T$ is said to have \emph{countable
  separation} if in some compactification $bT$, $X$ has star
  separation by a sequence $\langle {\mathscr G}_n: n \ge 1\rangle$
  of open subsets of $bT$ such that for each $n\ge 1$, $\mathscr
  G_n$ contains only one member. It was observed in \cite{KKM:01}
  that every Baire space having countable separation is strongly
  Baire. Recently, it has been shown that every Baire space which
  has star separation in some compactification is $(\alpha,
  G_{C^{\widetilde{}}})$-favorable, and thus is a strongly Baire
  space. Note that a $(\alpha, G_{C^{\widetilde{}}})$-favorable
  space is called an \emph{monotonically $p$-space} by Cao and
  Piotrowski in \cite{cao-piotrowski:04} and moreover, each
  monotonically $p$-space is an almost monotonically $p$-space.
  \end{remark}

  \section{Metrizability revisited} \label{sec:metrizable}

  In this section, we study the metrizability of topologies
  ${\mathscr T}_{\mathscr B}$ and ${\mathscr T}_{\mathscr B}^s$ respectively, and improve some results in the literature.
  Beer and Levi \cite{beer-levi:09} characterized the
  metrizability of ${\mathscr T}_{\mathscr B}^s$ in terms of
  $\mathscr B$, as shown in the following result.

  \begin{theorem}[\cite{beer-levi:09}] \label{thm:beer09}
  Let $(X,d)$ be a metric space and $\mathscr B$ be a bornology
  on $X$ with a closed base $\mathscr S$. The following
  statements are equivalent.\\
  {\rm (1)} The bornology $\mathscr B$ has a countable base.\\
  {\rm (2)} For each metric space $(Y,\rho)$, ${\mathscr
  T}_{\mathscr B}^s$ on $(C(X,Y))$ is metrizable.\\
  {\rm (3)} For each metric space $(Y,\rho)$, ${\mathscr
  T}_{\mathscr B}^s$ on $(C(X,Y))$ is first countable.\\
  {\rm (4)} The space $(C(X),{\mathscr T}_{\mathscr B}^s)$ has
  a countable local base at some $f\in C(X)$.
  \end{theorem}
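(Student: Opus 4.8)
The plan is to prove the cycle of implications $(1)\Rightarrow(2)\Rightarrow(3)\Rightarrow(4)\Rightarrow(1)$, since $(2)\Rightarrow(3)$ and $(3)\Rightarrow(4)$ are essentially trivial (metrizable implies first countable, and first countable everywhere implies first countable at the particular point $f$, taking $Y=\R$). So the real content lies in $(1)\Rightarrow(2)$ and $(4)\Rightarrow(1)$.

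For $(1)\Rightarrow(2)$: suppose $\mathscr B$ has a countable base $\{B_n: n\ge 1\}$, which by passing to finite unions we may assume is increasing, and each $B_n$ closed. The uniformity $\Delta_{\mathscr B}^s$ is generated by the entourages $[B;\varepsilon]^s$. I would show that the countable family $\{[B_n; 1/m]^s : n,m\ge 1\}$ is a base for $\Delta_{\mathscr B}^s$: given any $[B;\varepsilon]^s$, pick $n$ with $B\subseteq B_n$ and $m$ with $1/m \le \varepsilon$; then $B^\delta \subseteq B_n^\delta$ for every $\delta>0$, so $[B_n;1/m]^s \subseteq [B;\varepsilon]^s$. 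Hence $\Delta_{\mathscr B}^s$ has a countable base, and a uniform space with a countable base for its uniformity is (pseudo)metrizable; since ${\mathscr T}_{\mathscr B}^s$ is Hausdorff (it refines ${\mathscr T}_p$, which is Hausdorff on $C(X,Y)$), the space $(C(X,Y),{\mathscr T}_{\mathscr B}^s)$ is metrizable. This works for any metric target $(Y,\rho)$.

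For $(4)\Rightarrow(1)$: suppose $(C(X),{\mathscr T}_{\mathscr B}^s)$ has a countable local base $\{W_n: n\ge 1\}$ at some $f\in C(X)$. Translating, the family $\{W_n - f\}$ is a countable local base at the zero function, so there are $B_n\in\mathscr B$ and $\varepsilon_n>0$ with $[B_n;\varepsilon_n]^s[f]\subseteq W_n$. The claim is that $\{B_n: n\ge 1\}$ is a base for $\mathscr B$. Suppose not: then there is $B\in\mathscr B$ with $B\not\subseteq B_n$ for every $n$. I would try to build a function $g\in C(X)$ close to $f$ in ${\mathscr T}_{\mathscr B}^s$ — i.e. belonging to $[B;\varepsilon]^s[f]$ for a suitable $\varepsilon$ — that fails to lie in some $W_n$, contradicting that the $W_n$ form a local base. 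Concretely, for each $n$ pick $x_n \in B\setminus B_n$; because $B_n$ is (WLOG) closed, $x_n$ has positive distance to $B_n$, and one can produce a continuous bump-type perturbation $g_n$ of $f$ that is supported near $x_n$, takes value far from $f(x_n)$ at $x_n$, and is within any prescribed $\varepsilon$ of $f$ off a small ball around $x_n$; then $g_n \in [B';\varepsilon]^s[f]$ for the right $B'$ but $g_n\notin [B_n;\varepsilon_n]^s[f]$ since $x_n\in B_n^\delta$ for all $\delta$ forces a value-discrepancy $\ge\varepsilon_n$ at points of $B_n^\delta$ arbitrarily near $x_n$. Diagonalizing these $g_n$ (or just using one at a time) contradicts the local base property. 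This last step — arranging the perturbation to respect both the strong enlargement $B^\delta$ on the "small" side and to violate the strong condition at $B_n$ — is the main obstacle, and it is exactly the place where the \emph{strong} uniform structure (quantifying over $\delta$) rather than the ordinary one must be used carefully; one needs the closedness of the base members to get the separating distances, and one should be careful that $X$ might have isolated points or that $B$ might accumulate on $\bigcup_n B_n$, so the perturbations must be localized to shrinking neighborhoods of the $x_n$.

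Thus the chain closes: $(1)\Rightarrow(2)\Rightarrow(3)\Rightarrow(4)\Rightarrow(1)$, which gives the equivalence of all four statements.
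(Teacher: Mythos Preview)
The paper does not itself prove this theorem; it is quoted from \cite{beer-levi:09} as background, so there is no in-paper proof to compare against. (The paper does, however, carry out essentially the argument needed for $(4)\Rightarrow(1)$ in the final paragraph of its proof of Theorem~\ref{thm:strongmetric}.)

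Assessing your proposal on its own merits: $(1)\Rightarrow(2)$ is correct. But your $(4)\Rightarrow(1)$ has the key containment reversed. Having fixed a local base $\{W_n=[B_n;\varepsilon_n]^s(f)\}$ at $f$ (with each $B_n\in\mathscr S$ closed) and a bad $B\in\mathscr B$ with $B\not\subseteq B_n$ for all $n$, the contradiction comes from exhibiting, for \emph{each} $n$, a function $g_n\in W_n\setminus[B;1]^s(f)$; this shows that no $W_n$ is contained in the neighborhood $[B;1]^s(f)$, violating the local-base property. Concretely: pick $x_n\in B\setminus B_n$, use closedness of $B_n$ to get $\lambda_n=\tfrac12\, d(x_n,B_n)>0$, and take $g_n\in C(X)$ equal to $f$ on $B_n^{\lambda_n}$ but with $|g_n(x_n)-f(x_n)|\ge 1$. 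Then $g_n\in[B_n;\varepsilon_n]^s(f)=W_n$ (it agrees with $f$ on an enlargement of $B_n$) while $g_n\notin[B;1]^s(f)$ (since $x_n\in B\subseteq B^\delta$ for every $\delta>0$). You instead aim for $g_n\in[B;\varepsilon]^s(f)\setminus W_n$, which proves nothing about the local-base property, and you write ``$x_n\in B_n^\delta$ for all $\delta$'', which is false: $x_n$ is bounded \emph{away} from $B_n$; the relevant fact is $x_n\in B$, hence $x_n\in B^\delta$ for every $\delta$. No diagonalization is needed --- one fixed neighborhood $[B;1]^s(f)$ already defeats every $W_n$.
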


  In \cite{caserta-dimaio-hola:10}, Caserta et al. provided
  further characterizations for the metrizability of ${\mathscr T
  }_{\mathscr B}^s$ in terms of other countability type
  properties, e.g., the pointwise countable property.

  \medskip
  Next, we will provide a new characterization of the metrizability
  of ${\mathscr T }_{\mathscr B}$ and ${\mathscr T }_{\mathscr
  B}^s$ , which will help us to study completeness properties
  of these topologies. First, we need to introduce some notation.
  Let $D\subseteq T$ be a dense set of a topological space $T$.
  A point $x\in T$ is
  called a \emph{$q_D$-point} of $T$ if there exists a sequence
  $\langle U_n: n\ge 1 \rangle$ of open neighborhood of $x$
  such that if $x_n \in U_n \cap D$ for all $n\ge 1$, the
  sequence $\langle x_n: n\ge 1 \rangle$ has a cluster point in
  $T$. If $D=T$, a $q_D$-point is simply called a \emph{q-point}.
  If every point of $T$ is a $q$-point, then $T$ is called a
  \emph{q-space}. Obviously, every $q$-point is a $q_D$-point.

  \begin{proposition} \label{prop:existqd}
  Let $T$ be a topological
  space. If $T$ is strongly Baire or contains a dense subset which
  is a $q$-space itself, then $T$ contains at least one $q_D$-point
  for some dense subset $D \subseteq T$.
  \end{proposition}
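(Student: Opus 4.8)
The plan is to handle the two hypotheses separately, since each yields a $q_D$-point essentially for free once one unwinds the definitions.

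First suppose $T$ contains a dense subspace $Z$ that is a $q$-space. Set $D = Z$, so $D$ is dense in $T$. I claim any point $x \in Z$ is a $q_D$-point of $T$. Since $Z$ is a $q$-space, $x$ is a $q$-point \emph{of $Z$}, so there is a sequence $\langle W_n : n \ge 1 \rangle$ of open neighbourhoods of $x$ in $Z$ such that whenever $x_n \in W_n$ the sequence $\langle x_n \rangle$ clusters in $Z$. Writing $W_n = U_n \cap Z$ with $U_n$ open in $T$, the $U_n$ are open neighbourhoods of $x$ in $T$, and if $x_n \in U_n \cap D = U_n \cap Z = W_n$ then $\langle x_n \rangle$ clusters in $Z$, hence in $T$. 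So $x$ is a $q_D$-point of $T$ and we are done in this case.

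Now suppose $T$ is strongly Baire. By definition there is a dense $D \subseteq T$ such that $\beta$ has no winning strategy in the game $\mathscr G_S(D)$ played in $T$. Consider the trivial strategy in which $\beta$ plays $V_1 = T$ as his first move and thereafter simply copies $\alpha$'s previous move: formally, $\sigma(\emptyset) = T$ and $\sigma(U_1, \dots, U_n) = U_n$. Since $\sigma$ is not a winning strategy for $\beta$, there is a run $\langle (V_n, U_n) : n \ge 1 \rangle$ consistent with $\sigma$ that $\alpha$ wins; by the copying rule this run is just a decreasing sequence $U_1 \supseteq U_2 \supseteq \cdots$ of nonempty open sets with $\bigcap_n U_n \neq \emptyset$ and such that every sequence $\langle x_n \rangle$ with $x_n \in U_n \cap D$ has a cluster point in $T$. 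Pick any $x \in \bigcap_n U_n$. Then $\langle U_n : n \ge 1 \rangle$ is a decreasing sequence of open neighbourhoods of $x$ witnessing that $x$ is a $q_D$-point of $T$.

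The two cases together give the proposition. I do not anticipate a serious obstacle here; the only point requiring a little care is the use of the copying strategy in the strongly Baire case — one must make sure the ``copy the opponent'' strategy is a legal strategy for $\beta$ in $\mathscr G_S(D)$ (it is, since $\alpha$'s move $U_n$ is always a nonempty open subset of $\beta$'s previous move, so $\beta$ may legally play it again), and that the resulting winning run for $\alpha$ delivers exactly the data in the definition of a $q_D$-point. If one prefers to avoid the copying trick, an alternative is to note that a strongly Baire space is in particular Baire and that the winning condition for $\alpha$ in $\mathscr G_S(D)$ is stronger than in $Ch(T)$, but the copying-strategy argument is the cleanest route to producing a single point with the required neighbourhood sequence.
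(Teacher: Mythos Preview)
Your proof is correct and follows essentially the same approach as the paper. In the dense $q$-space case you do exactly what the paper does (lift the witnessing neighbourhoods from the subspace to $T$); in the strongly Baire case the paper simply says ``let $t_0$ be any strategy of $\beta$'' and uses that it is not winning, whereas you specify the copying strategy explicitly --- this is a purely cosmetic difference, and the conclusion (any point of $\bigcap_n U_n$ is a $q_D$-point) is drawn identically.
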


  \begin{proof}
  If $D \subseteq T$ is a dense subset which is $q$-space
  itself. For each $x\in D$, let $\langle U_n: n \ge 1\rangle$ be
  a sequence of open neighborhoods of $x$ in the subspace $D$ that
  witnesses $x$ to be a $q$-point in $D$. For each $n\ge 1$, pick
  an open subset $\widetilde{U}_n$ in $T$ such that $U_n =
  \widetilde{U}_n\cap D$. Then, $\langle \widetilde{U}_n: n \ge
  1 \rangle$ witnesses $x$ to be a $q_D$-point in $T$.

  \medskip
  Let $D \subseteq T$ be a dense subset of $T$ such that $\beta$
  does not have a winning strategy in the game ${\mathscr
  G}_S(D)$ played in $T$. Let $t_0$ be any strategy of
  $\beta$. Since $t_0$ is not a winning strategy when $\beta$
  applies $t_0$ in the game ${\mathscr G}_S(D)$, there exists
  a sequence $\langle U_n: n\ge 1\rangle$ of nonempty subsets
  of $T$ such that $\bigcap_{n\ge 1} U_n \ne \emptyset$ and every
  sequence $\langle x_n: n\ge 1 \rangle$ with $x_n \in U_n \cap
  D$ for every $n \ge 1$ has a cluster point in $T$. It is easy
  to see that every point in $\bigcap_{n\ge 1} U_n$ is a
  $q_D$-point of $X$.
  \end{proof}

  In Proposition \ref{prop:existqd}, if $T$ is homogeneous,
  then every point of $T$ is a $q_D$-point for some dense
  subset $D\subseteq T$. However, the converse of Proposition \ref{prop:existqd} does not hold even for homogeneous
  first countable spaces. For example, the Sorgenfrey line
  $\mathbb S$ is a Baire and first countable paratopological
  group, but not strongly Baire, since it is not a topological
  group, by Theorem 2 in \cite{KKM:01}.

  \begin{theorem} \label{thm:strongmetric}
  Let $(X,d)$ be a metric space and $\mathscr B$ be a bornology
  on $X$ with a closed base $\mathscr S$. Then, ${\mathscr
  T}_{\mathscr B}^s$ on $C(X)$ is metrizable if and only if the
  space $(C(X),{\mathscr T}_{\mathscr B}^s)$ has at least one
  $q_D$-point for some dense subset $D \subseteq C(X)$.
  \end{theorem}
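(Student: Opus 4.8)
The proof splits into the two implications, the forward one being essentially formal and the converse carrying all the content. If ${\mathscr T}_{\mathscr B}^s$ on $C(X)$ is metrizable, then it is first countable, hence a $q$-space (given a decreasing countable base $\langle U_n:n\ge1\rangle$ at a point and $x_n\in U_n$, one has $x_n\to x$); and since every $q$-point is a $q_D$-point for $D=C(X)$, and $C(X)$ is dense in itself, $(C(X),{\mathscr T}_{\mathscr B}^s)$ has a $q_D$-point. The substance is the converse, and my plan there is to extract from a single $q_D$-point a countable base for $\mathscr B$ and then quote Theorem~\ref{thm:beer09}.

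So suppose $f_0\in C(X)$ is a $q_D$-point for some dense $D\subseteq C(X)$, witnessed by open neighborhoods $\langle W_n:n\ge1\rangle$ of $f_0$. Shrinking each $W_n$ and replacing the associated set by a member of $\mathscr S$ containing it together with the previous ones, I may assume $W_n=\{g\in C(X):(f_0,g)\in[B_n;\varepsilon_n]^s\}$ with $B_n\in\mathscr S$ closed, $B_n\subseteq B_{n+1}$, and $\varepsilon_n$ nonincreasing; I then claim $\{B_n:n\ge1\}$ is a base for $\mathscr B$. If it were not, there would be $B_*\in\mathscr S$ with $B_*\not\subseteq B_n$ for all $n$, and I would pick $x_n\in B_*\setminus B_n$, noting $r_n:=d(x_n,B_n)>0$ as $B_n$ is closed. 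For each $n$ choose a continuous $\varphi_n\colon X\to[0,\infty)$ with $\varphi_n(x_n)=n$, with $\varphi_n\equiv0$ on the enlargement $B_n^{r_n/2}$, and with $\varphi_n(x_k)=0$ for every $k<n$ such that $x_k\ne x_n$; this is possible since $x_n$ is at distance at least $r_n/2$ from $B_n^{r_n/2}$ and at positive distance from the finite set $\{x_k:k<n,\ x_k\ne x_n\}$ (a rescaled ``tent'' centered at $x_n$ works). Then $h_n:=f_0+\varphi_n$ coincides with $f_0$ on $B_n^{r_n/2}$, so $(f_0,h_n)\in[B_n;\varepsilon_n]^s$, i.e., $h_n\in W_n$, while $h_n(x_n)=f_0(x_n)+n$. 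Since $W_n$ is open, $D$ is dense, and the finitely many conditions $|g(x_k)-h_n(x_k)|<\tfrac14$ $(k\le n)$ cut out a ${\mathscr T}_p$-open neighborhood of $h_n$, I pick $g_n\in D\cap W_n$ agreeing with $h_n$ to within $\tfrac14$ at $x_1,\dots,x_n$.

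It then remains to see that $\langle g_n\rangle$ has no cluster point in $(C(X),{\mathscr T}_{\mathscr B}^s)$, contradicting the choice of $\langle W_n\rangle$. Put $Z:=\overline{\{x_n:n\ge1\}}$; since $\{x_n\}\subseteq B_*$ and $B_*$ is closed, $Z\subseteq B_*$, so $Z\in\mathscr B$ and $[Z;\tfrac12]^s$ is a symmetric entourage of $\Delta_{\mathscr B}^s$. For $n<m$, evaluate at $x_n\in Z$: $g_n(x_n)$ is within $\tfrac14$ of $f_0(x_n)+n$, whereas $g_m(x_n)$ is within $\tfrac14$ of $h_m(x_n)$, which equals $f_0(x_n)$ when $x_n\ne x_m$ and $f_0(x_n)+m$ when $x_n=x_m$; in both cases $|g_n(x_n)-g_m(x_n)|\ge\tfrac12$, so $(g_n,g_m)\notin[Z;\tfrac12]^s$. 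A sequence whose terms lie pairwise outside a fixed symmetric entourage has no cluster point, so the claim is proved: $\{B_n\}$ is a base, $\mathscr B$ is countably based, and Theorem~\ref{thm:beer09} yields metrizability of ${\mathscr T}_{\mathscr B}^s$ on $C(X)$.

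The step I expect to be hardest is the choice of the $\varphi_n$: each $g_n$ must stay in $W_n$ while the whole sequence becomes uniformly discrete for $\Delta_{\mathscr B}^s$. The reason this can be arranged is exactly the strong character of the topology --- membership in $W_n$ demands closeness to $f_0$ only on some enlargement $B_n^\delta$, and since $x_n$ is at positive distance $r_n$ from the closed set $B_n$ there is room to erect a tall bump at $x_n$ without leaving $W_n$; the ordinary topology ${\mathscr T}_{\mathscr B}$ would not provide this leverage. A second, purely bookkeeping, obstacle is the possibility that the $x_n$ cannot be taken distinct (they may all fall on a point of $B_*\setminus\bigcup_nB_n$), which is why $\varphi_n$ is permitted to be nonzero at an earlier $x_k$ equal to $x_n$ and why the last estimate treats $x_n\ne x_m$ and $x_n=x_m$ separately.
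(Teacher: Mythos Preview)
Your proof is correct, and it takes a somewhat different route from the paper's. The paper first translates to the zero function, then proves in a separate step that $X=\bigcup_{n\ge 1}B_n$, uses this to manufacture a refined sequence $\langle V_n\rangle$ with $\bigcap_n V_n=\bigcap_n\overline{V_n}=\{0_X\}$, and only then argues that the associated closed sets $C_n$ form a base: the sequence $\langle p_n\rangle$ built there must cluster, the cluster point is forced to be $0_X$ by the pinching condition, and one reaches a contradiction via a single basic neighborhood $[B;\tfrac13]^s(0_X)$. Your argument bypasses both the covering claim and the identification of the cluster point by making the sequence $\langle g_n\rangle$ pairwise far apart for the fixed entourage $[Z;\tfrac12]^s$ (with $Z=\overline{\{x_n\}}\subseteq B_*\in\mathscr B$), so that it cannot cluster at all. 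This is a cleaner one-step contradiction; the paper's detour through $X=\bigcup_n B_n$ has the mild advantage that this covering fact is isolated, but it is not logically needed for the conclusion.

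One small correction to your commentary: your remark that ``the ordinary topology ${\mathscr T}_{\mathscr B}$ would not provide this leverage'' is backwards. Membership in $[B_n;\varepsilon_n](f_0)$ constrains the function only on $B_n$ itself, which is \emph{weaker} than the ${\mathscr T}_{\mathscr B}^s$ requirement of agreement on some $B_n^\delta$; so the same bump construction works a fortiori for ${\mathscr T}_{\mathscr B}$, and indeed the paper states and proves the analogous result (Theorem~\ref{thm:metric}) by the same method. This does not affect the validity of your argument, only the aside.
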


  \begin{proof}
  The necessity is trivial. To prove the sufficiency, let $f_\ast
  \in C(X)$ be a $q_D$-point of $(C(X), {\mathscr T}_{\mathscr B}^s)$
  for some dense subset $D \subseteq C(X)$. Let $\widetilde{D}= D-
  f_\ast$. Then $\widetilde{D}$ is dense in $(C(X),
  {\mathscr T}_{\mathscr B}^s)$ and the zero function $0_X$ is a $q_{\widetilde{D}}$-point
  of $(C(X),{\mathscr T}_{\mathscr B}^s)$. This means that there is a
  sequence $\langle U_n: n \ge 1 \rangle$ of open neighborhoods
  of $0_X$ in $(C(X), {\mathscr T}_{\mathscr B}^s)$ such that any
  sequence $\langle f_n: n\ge 1\rangle$ with $f_n \in U_n \cap
  \widetilde{D}$ for all $n\ge 1$ has a cluster point in $(C(X),
  {\mathscr T}_{\mathscr B}^s)$. Without loss of generality, we can
  assume that $U_n=[B_n; \varepsilon_n]^s(0_X)$, where $B_n \in
  \mathscr S$ and $\varepsilon_n>0$ for all $n\ge 1$.

  \medskip
  First, we claim that $X=\bigcup_{n\ge 1} B_n$. Assume the
  contrary, then there will be a point $x_0\in X \setminus
  \bigcup_{n\ge 1} B_n$. For each $n\ge 1$, let $\lambda_n =
  \frac{1}{2}d(x_0,B_n) >0$. Since $x_0 \not\in
  \overline{B_n^{\lambda_n}}$ for each $n\ge 1$, we can choose
  a continuous function $f_n: X \to \mathbb R$ such that
  $f_n(x_0)=n$ and $f_n(x) =0$ for every point
  $x\in B_n^{\lambda_n}$. Next, for each $n\ge 1$,
  we choose a function $g_n\in C(X)$ such that
  \[
  g_n \in [B_n; \varepsilon_n]^s(0_X) \cap \left[\{x_0\},
  \frac{1}{3}\right]^s(f_n) \cap \widetilde{D}
  \]
  Since $0_X$ is a $q_{\widetilde{D}}$-point, then the sequence
  $\langle g_n: n\ge 1\rangle$ must have a cluster point $f_0$
  in $(C(X),{\mathscr T}_{\mathscr B}^s)$. However, this is not
  possible, since
  \[
  n-\frac{1}{3} < g_n(x_0) < n+\frac{1}{3}
  \]
  for all $n\ge 1$. This is a contradiction. Thus, the claim is
  verified.

  \medskip
  Now, choose a sequence $\langle V_n: n\ge 1 \rangle$ of open
  neighborhoods of $0_X$ in $(C(X), {\mathscr T}_{\mathscr B}^s)$
  with $V_n=[C_n; \delta_n]^s(0_X)$ such that\\
  (1) $C_n \in \mathscr S$ for all $n\ge 1$,\\
  (2) $B_n \subseteq C_n \subseteq C_{n+1}$ for all $n\ge 1$,\\
  (3) $0< \delta_{n+1} < \delta_n < \varepsilon_n$ for all $n\ge 1$,\\
  (4) $\bigcap_{n \ge 1} V_n =\bigcap_{n \ge 1} \overline{V}_n
  =\{0_X\}$.\\
  Note that (1)--(4) can be done as follows: Define $O_{ij}=[B_i;
  \frac{1}{j}]^s(0_X)$ for all $i,j \ge 1$, and re-label as sets
  $G_n$ so that $\langle G_n: n\ge 1\rangle =\langle O_{i,j}: i, j
  \ge 1\rangle$. For any function $f\in C(X)\setminus \{0_X\}$, there
  is some point $y_0 \in X$ with $f(y_0)\ne 0$. Since $X=\bigcup_{n
  \ge 1} B_n$, we can select $i_0 \ge 1$ such that $y_0 \in B_{i_0}$.
  In addition, choose $j_0 \ge 1$ such that $|f(y_0)|>\frac{1}{j_0}$.
  It follows that $f\not\in O_{i_0j_0}$. Thus, $\bigcap_{n\ge 1} G_n
  =\{0_X\}$. Regularity of $(C(X),{\mathscr T}_{\mathscr B}^s)$ allows
  us to shrink the sets $G_n$ to open sets $H_n$ so that
  $0_X \in H_n \subseteq \overline{H}_n \subseteq G_n$.
  By shrinking further if necessary, we may assume that
  $V_n$ is of the required form.

  \medskip
  Finally, we claim that $\{C_n: n \ge 1\}$ is a base for $\mathscr B$.
  If not, there exists some element $B \in \mathscr B$ such that
  $B\not \subseteq C_n$ for any $n\ge 1$. For each $n \ge 1$, we first
  select a point $x_n \in B \not \in C_n$ and put $\eta_n=\frac{1}{2}
  d(x_n,C_n)>0$. Since $x_n \not \in \overline{C_n^{\eta_n}}$ for
  each $n\ge 1$, we can select a continuous function $h_n: X \to
  \mathbb R$ such that $h_n(x_n)=1$ and $h_n(x)=\{0\}$ for each
  point $x\in C_n^{\eta_n}$. Like what we have done previously,
  for each $n\ge 1$, we select a continuous function $p_n: X \to
  \mathbb R$ such that
  \[
  p_n \in [C_n; \varepsilon_n]^s(0_X) \cap \left[\{x_n\},
  \frac{1}{3}\right]^s(h_n) \cap \widetilde{D}.
  \]
  Since $0_X$ is a $q_{\widetilde{D}}$-point of $(C(X),{\mathscr
  T}_{\mathscr B}^s)$, the sequence $\langle p_n: n \ge 1\rangle$
  has a cluster point $p_\ast$ in $(C(X), {\mathscr T}_{\mathscr
  B}^s)$. By (4), we have
  $p_\ast =0_X$. However, since $p_n \in [\{x_n\}, \frac{1}{3}]^s
  (h_n)$ implies $\frac{2}{3}<p_n(x_n)<\frac{4}{3}$, we conclude
  that $p_n \not \in [B;\frac{1}{3}]^s(0_X)$ for any $n\ge 1$. This
  is a contradiction, which verifies that $\{C_n: n \ge 1\}$ is a
  base for $\mathscr S$. Therefore, by Theorem \ref{thm:beer09}, $(C(X),{\mathscr T}_{\mathscr B}^s)$ is metrizable.
  \end{proof}

  \begin{theorem} \label{thm:metric}
  Let $X$ be a Tychonoff space and $\mathscr B$ be a bornology
  on $X$ with a closed base $\mathscr S$. Then, ${\mathscr
  T}_{\mathscr B}$ on $C(X)$ is metrizable if and only if the
  space $(C(X),{\mathscr T}_{\mathscr B})$ has at least one
  $q_D$-point for some dense subset $D \subseteq C(X)$.
  \end{theorem}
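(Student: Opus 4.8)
\textit{Proof proposal.} The necessity is immediate: if $\mathscr T_{\mathscr B}$ on $C(X)$ is metrizable, then $(C(X),\mathscr T_{\mathscr B})$ is first countable, so every point is a $q$-point and hence a $q_D$-point with $D=C(X)$. For the sufficiency the plan is to imitate the proof of Theorem \ref{thm:strongmetric}, with the one structural change that the distance functions supplied there by the metric $d$ on $X$ are replaced by functions supplied by complete regularity of $X$. So let $f_\ast\in C(X)$ be a $q_D$-point of $(C(X),\mathscr T_{\mathscr B})$ for some dense $D\subseteq C(X)$. Since $(C(X),\mathscr T_{\mathscr B})$ is a topological group, translation by $-f_\ast$ is a homeomorphism, so the zero function $0_X$ is a $q_{\widetilde D}$-point with $\widetilde D=D-f_\ast$ dense; after passing to sub-neighbourhoods I may assume the witnessing neighbourhoods are basic, say $U_n=[B_n;\varepsilon_n](0_X)$ with $B_n\in\mathscr S$ and $\varepsilon_n>0$.

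First I would check $X=\bigcup_{n\ge1}B_n$. If not, pick $x_0\in X\setminus\bigcup_{n\ge1}B_n$; for each $n$ the set $B_n$ is closed and $x_0\notin B_n$, so by complete regularity of $X$ there is a continuous $f_n\colon X\to\mathbb R$ with $f_n(x_0)=n$ and $f_n\equiv0$ on $B_n$. Then $f_n\in[B_n;\varepsilon_n](0_X)\cap[\{x_0\};\tfrac{1}{3}](f_n)$, so this set is nonempty and open, and density of $\widetilde D$ lets me choose $g_n\in[B_n;\varepsilon_n](0_X)\cap[\{x_0\};\tfrac{1}{3}](f_n)\cap\widetilde D$. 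Since $g_n(x_0)>n-\tfrac{1}{3}\to\infty$, the sequence $\langle g_n\rangle$ has no cluster point even in the coarser topology $\mathscr T_p$, contradicting that $0_X$ is a $q_{\widetilde D}$-point. With $X=\bigcup_nB_n$ in hand, one has $\bigcap_{i,j\ge1}[B_i;1/j](0_X)=\{0_X\}$, and, combining this with the regularity of $(C(X),\mathscr T_{\mathscr B})$, I can build, exactly as in Theorem \ref{thm:strongmetric}, basic neighbourhoods $V_n=[C_n;\delta_n](0_X)$ of $0_X$ with $C_n\in\mathscr S$, $B_n\cup C_n\subseteq C_{n+1}$, $0<\delta_{n+1}<\delta_n<\varepsilon_n$ (so the $V_n$ decrease), and $\bigcap_{n\ge1}V_n=\bigcap_{n\ge1}\overline{V_n}=\{0_X\}$; this part is a routine diagonalisation over the countable family $\{[B_i;1/j](0_X):i,j\ge1\}$.

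The crux, and the step I expect to carry the weight of the argument, is to show $\{C_n:n\ge1\}$ is a base for $\mathscr B$; once this is done, $\mathscr B$ has a countable base and hence $\mathscr T_{\mathscr B}$ on $C(X)$ is metrizable, since a countable base for $\mathscr B$ gives a countable base of entourages (cf.\ \cite{mccoy-natantu:88}). Suppose not, so there is $B\in\mathscr B$ with $B\not\subseteq C_n$ for every $n$; pick $x_n\in B\setminus C_n$, and, using complete regularity of $X$ again (here $C_n$ is closed), pick a continuous $h_n\colon X\to\mathbb R$ with $h_n(x_n)=1$ and $h_n\equiv0$ on $C_n$. Then $h_n\in V_n\cap[\{x_n\};\tfrac{1}{3}](h_n)$, so this set is nonempty and open, and I choose $p_n\in V_n\cap[\{x_n\};\tfrac{1}{3}](h_n)\cap\widetilde D$. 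Since $B_n\subseteq C_n$ and $\delta_n<\varepsilon_n$ we get $p_n\in V_n\subseteq[B_n;\varepsilon_n](0_X)=U_n$, so by the $q_{\widetilde D}$-property $\langle p_n\rangle$ has a cluster point $p_\ast$; as the $V_n$ decrease, $p_m\in\overline{V_n}$ for all $m\ge n$, whence $p_\ast\in\bigcap_{n\ge1}\overline{V_n}=\{0_X\}$, i.e.\ $p_\ast=0_X$. But $p_n(x_n)>\tfrac{2}{3}$ with $x_n\in B$ shows $p_n\notin[B;\tfrac{1}{3}](0_X)$ for every $n$, so $\langle p_n\rangle$ cannot cluster at $0_X$, a contradiction. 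The only genuine work beyond bookkeeping is producing the auxiliary functions $f_n,h_n$ from the Tychonoff property and verifying they lie inside the relevant basic neighbourhoods so that the $q_{\widetilde D}$-point machinery applies; everything else transcribes the proof of Theorem \ref{thm:strongmetric}.
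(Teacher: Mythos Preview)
Your proposal is correct and matches the paper's approach exactly: the paper itself omits the proof of Theorem~\ref{thm:metric} as being ``similar to that of Theorem~\ref{thm:strongmetric}'', and your adaptation---replacing the metric enlargements $B_n^{\lambda_n}$, $C_n^{\eta_n}$ by direct appeals to complete regularity of $X$---is precisely what is required for the non-strong topology. The only slip is a bookkeeping one: in constructing the $V_n$ you write $B_n\cup C_n\subseteq C_{n+1}$ but later invoke $B_n\subseteq C_n$ to conclude $p_n\in V_n\subseteq U_n$; simply require $B_n\subseteq C_n\subseteq C_{n+1}$ from the outset (as in condition~(2) of Theorem~\ref{thm:strongmetric}) and everything goes through.
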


  \begin{proof}
  The proof is is omitted, as it is similiar to that of
  Theorem~\ref{thm:strongmetric}.
  \end{proof}

  \begin{remark}
  Theorem \ref{thm:strongmetric} and Theorem \ref{thm:metric} can
  be re-stated in term of an arbitrary metric space $(Y,\rho)$ just
  like that in Theorem \ref{thm:beer09}.
  \end{remark}

  \section{Completeness properties} \label{sec:complete}

  In this section, we study completeness properties of topologies
  ${\mathscr T}_{\mathscr B}$ and ${\mathscr T}_{\mathscr B}^s$ on
  $C(X,Y)$ when $X$ is either a normal or metric space and $Y$
  is a complete metric space.
  Let $X$ be topological space and $\mathscr B$ be a bornology on
  $X$. We call $X$ a \emph{${\mathscr B}_{\mathbb R}$-space}
  provided that for any function $f: X \to \mathbb R$, $f$
  is continuous if and only if $f\uhr B: B \to \mathbb R$ is
  continuous for all $B \in \mathscr B$. If ${\mathscr B} =
  {\mathscr K}(X)$, a ${\mathscr B}_{\mathbb R}$-space is called
  a \emph{$k_{\mathbb R}$-space}, \cite{mccoy-natantu:86} and
  \cite{mccoy-natantu:88}.

  \begin{theorem} \label{thm:completemetric}
  Let $X$ be a normal space and $\mathscr B$ be a bornology on $X$
  with a closed base $\mathscr S$. Then, ${\mathscr T}_{\mathscr B}$
  on $C(X)$ is completely metrizable if and only if $(C(X),{\mathscr
  T}_{\mathscr B})$ is strongly Baire and $X$ is a ${\mathscr
  B}_{\mathbb R}$-space.
  \end{theorem}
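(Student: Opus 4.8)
The plan is to prove both directions by reducing to the metrizability criterion of Theorem~\ref{thm:metric} together with known facts about strongly Baire spaces and completeness. For the necessity, suppose ${\mathscr T}_{\mathscr B}$ on $C(X)$ is completely metrizable. Then $(C(X),{\mathscr T}_{\mathscr B})$ is a completely metrizable topological group, hence \v{C}ech complete, hence almost complete, hence strongly Baire by the implications recorded in Section~\ref{sec:games}. It remains to show $X$ is a ${\mathscr B}_{\mathbb R}$-space. The key observation here is that $C(X)$ with ${\mathscr T}_{\mathscr B}$ sits inside $\mathbb R^X$ (or inside the space of all ${\mathscr B}$-continuous real functions with the same topology of uniform convergence on ${\mathscr B}$) as a subgroup, and completeness of ${\mathscr T}_{\mathscr B}$ forces this subgroup to be closed in that larger complete group. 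If $X$ were not a ${\mathscr B}_{\mathbb R}$-space, there would be a non-continuous $f:X\to\mathbb R$ that is continuous on each $B\in{\mathscr B}$; using normality and a Tietze-extension argument one builds continuous functions on $X$ that approximate $f$ uniformly on each member of ${\mathscr S}$, producing a ${\mathscr T}_{\mathscr B}$-Cauchy net in $C(X)$ whose only candidate limit is $f\notin C(X)$, contradicting completeness.

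For the sufficiency, assume $(C(X),{\mathscr T}_{\mathscr B})$ is strongly Baire and $X$ is a ${\mathscr B}_{\mathbb R}$-space. First I would invoke Proposition~\ref{prop:existqd}: a strongly Baire space contains a $q_D$-point for some dense $D$. By Theorem~\ref{thm:metric}, this makes ${\mathscr T}_{\mathscr B}$ on $C(X)$ metrizable, and in fact (via the argument establishing Theorem~\ref{thm:metric}) ${\mathscr B}$ has a countable base, say $\{C_n:n\ge 1\}$ with each $C_n$ closed and $C_n\subseteq C_{n+1}$. The ${\mathscr B}_{\mathbb R}$-property then identifies $C(X)$ with the set of all functions that are continuous on each $C_n$; equipped with the metric $\sum_n 2^{-n}\min\{1,\sup_{x\in C_n}|f(x)-g(x)|\}$ this is a closed subspace of the complete metric space $\prod_n C(C_n)$ (the compatibility conditions ``restrictions to $C_n$ and $C_{n+1}$ agree on $C_n$'' being closed conditions, and a coherent family glues to a function continuous on each $C_n$, hence on $X$ by the ${\mathscr B}_{\mathbb R}$-hypothesis). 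A closed subspace of a completely metrizable space is completely metrizable, finishing this direction.

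The main obstacle I anticipate is the sufficiency's gluing/completeness step: verifying carefully that a Cauchy sequence in ${\mathscr T}_{\mathscr B}$ (equivalently, uniformly Cauchy on each $C_n$) has a pointwise limit $f$ that is genuinely in $C(X)$. Pointwise the limit exists since ${\mathbb R}$ is complete and $X=\bigcup_n C_n$; that $f\uhr C_n$ is continuous follows because it is a uniform limit of continuous functions on $C_n$; and then the ${\mathscr B}_{\mathbb R}$-hypothesis upgrades this to continuity of $f$ on $X$. The slightly delicate point is ensuring that $\{C_n\}$ really is a base for ${\mathscr B}$ (not merely a countable subfamily), so that ${\mathscr T}_{\mathscr B}$-convergence is exactly uniform convergence on all the $C_n$; this is where one leans on the construction inside the proof of Theorem~\ref{thm:metric}. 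A secondary subtlety is that strongly Baire is stated only for regular spaces, but $(C(X),{\mathscr T}_{\mathscr B})$ is Tychonoff as noted after the definition of ${\mathscr T}_{\mathscr B}^s$, so the hypothesis is legitimate. Once metrizability and the countable base are in hand, the remainder is the standard ``closed subspace of a countable product of complete metric spaces'' argument.
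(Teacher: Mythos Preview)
Your plan is correct and matches the paper's proof closely: both directions use the same ingredients (Tietze extensions to build a Cauchy net witnessing the ${\mathscr B}_{\mathbb R}$-property in necessity; Proposition~\ref{prop:existqd} and Theorem~\ref{thm:metric} to obtain metrizability and a countable base, followed by a direct check that $\Delta_{\mathscr B}$-Cauchy sequences converge, in sufficiency---your ``closed subspace of $\prod_n C(C_n)$'' is just a repackaging of the same verification). The paper additionally inserts a step showing ${\mathscr K}(X)\subseteq \mathscr B$ from the Baire property, but this is not used in the remainder of the argument for this theorem, so your omission of it is harmless.
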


  \begin{proof}
  Assume that ${\mathscr T}_{\mathscr B}$ on $C(X)$ is completely
  metrizable. It is obvious that $(C(X),{\mathscr T}_{\mathscr B})$
  is strongly Baire. To see that $X$ is a ${\mathscr B}_{\mathbb
  R}$-space, note that if ${\mathscr T}_{\mathscr B}$ on $C(X)$
  is completely metrizable, then $(C(X), \Delta_{\mathscr B})$
  is complete. Let $\mathscr S$ be endowed with the partial
  ordered $\preccurlyeq$ defined by the set inclusion. Let $f: X
  \to \mathbb R$ be a
  function such that $f\uhr A$ is continuous for all $A \in
  \mathscr S$. By normality of $X$, each $f\uhr A$ can be
  extended to a continuous function $f_A: X \to \mathbb R$.
  Then, it is easy to verify that  $\langle f_A, {\mathscr S},
  \preccurlyeq \rangle$ is a Cauchy net and converges to $f$ in
  $(C(X), \Delta_{\mathscr B})$. Thus, the function $f$ is
  continuous.

  \medskip
  Now, suppose that $(C(X),{\mathscr T}_{\mathscr B})$ is strongly
  Baire and $X$ is a ${\mathscr B}_{\mathbb R}$-space. First, by
  Proposition \ref{prop:existqd}, $(C(X),{\mathscr T}_{\mathscr B})$
  contains at least one $q_D$-point. Thus, Theorem \ref{thm:metric}
  implies that ${\mathscr T}_{\mathscr B}$ on $C(X)$ is metrizable.
  First, we need to show that ${\mathscr K}(X)\subseteq \mathscr B$.
  Take an element $A\in {\mathscr K}(X)$.
  For each $n\ge 1$, put
  \[
  F_n= \{f\in C(X): |f(x)| \le n \mbox{ for every } x \in A\}.
  \]
  It is clear that $G_n$ is closed and $C(X)=\bigcup_{n\ge 1} F_n$.
  Since $(C(X), {\mathscr T}_{\mathscr B})$ is strongly Baire, then
  there exists some $n_0 \geq 1$ such that ${\rm Int}(F_{n_0}) \ne
  \emptyset$. It follows that there exist an element $B\in \mathscr
  S$, $\varepsilon >0$ and $f\in C(X)$ such that $[B;\varepsilon]
  (f) \subseteq F_{n_0}$. We claim that $A \subseteq B$. If not,
  we can pick up a point $x_0 \in A\setminus B$, and then define a
  continuous function $g: B \cup \{x_0\} \to \mathbb R$ such that
  \[
  g(x) = \left \{
  \begin{array}{ll}
  f(x), & \mbox{if $x \in B$;}\\[0.5em]
  n+1, & \mbox{if $x=x_0$.}
  \end{array}
  \right.
  \]
  Since $g$ is continuous on a closed subspace $B \cup \{x_0\}$
  of a normal space $X$, then $g$ has an extension
  $\widetilde{g} \in C(X)$. It is clear that $\widetilde{g} \in
  [B;\varepsilon](f)$. However, $\widetilde{g}\not\in F_n$, as
  $|\widetilde{g}(x_0)|> n$. Thus, the claim is verified. It follows
  that $A\in {\mathscr B}$. Let $\{B_n: n \ge 1\}$ be a countable base
  of $\mathscr B$. Then, it is easy to see that $\{[B_k, \frac{1}{n}]:
  k, n \ge 1\}$ is a countable base for $\Delta_{\mathscr B}$. This
  implies that $\Delta_{\mathscr B}$ is metrizable. To show that
  $\Delta_{\mathscr B}$ is complete, let $\langle f_n: n\ge 1\rangle$
  be a $\Delta_{\mathscr B}$-Cauchy sequence in $C(X)$. For each
  $x\in X$, the sequence $\langle f_n(x): n\ge 1\rangle$ has a
  limit point $\lim_{n\ge 1}f_n(x)$. Define a function $f: X \to
  \mathbb R$ such that $f(x) = \lim_{n\ge 1}f_n(x)$. It can be
  verified readily that the sequence $\langle f_n\uhr B: n \ge 1
  \rangle$ converges uniformly to $f\uhr B$ for each $B\in \mathscr
  B$. It follows that $f\uhr B$ is continuous for all $B \in\mathscr
  B$. Since $X$ is a ${\mathscr B}_{\mathbb R}$-space, $f$ is
  continuous. Also, $\langle f_n: n\ge 1\rangle$ ${\mathscr
  T}_{\mathscr B}$-converges to $f$.
  \end{proof}

  \begin{corollary} \label{coro:completemetric}
  Let $(X,d)$ be a metric space and $\mathscr B$ be a bornology
  with a closed base $\mathscr S$. Then,${\mathscr T}_{\mathscr B}$
  on $C(X)$ is completely metrizable if and only if the space
  $(C(X), {\mathscr T}_{\mathscr B})$ is strongly Baire.
  \end{corollary}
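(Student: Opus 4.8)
The plan is to obtain this as an essentially immediate consequence of Theorem~\ref{thm:completemetric}, the point being that when $X$ is metric the extra hypothesis ``$X$ is a ${\mathscr B}_{\mathbb R}$-space'' in that theorem becomes automatic as soon as $(C(X),{\mathscr T}_{\mathscr B})$ is strongly Baire.

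For the necessity there is nothing to do: if ${\mathscr T}_{\mathscr B}$ on $C(X)$ is completely metrizable then it is sieve complete, hence almost complete, hence strongly Baire, by the chain of implications recorded in Section~\ref{sec:games} (equivalently, this is the easy half of Theorem~\ref{thm:completemetric}, since every metric space is normal).

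For the sufficiency, assume $(C(X),{\mathscr T}_{\mathscr B})$ is strongly Baire. As $(X,d)$ is metric it is normal, so by Theorem~\ref{thm:completemetric} it is enough to check that $X$ is a ${\mathscr B}_{\mathbb R}$-space. First I would reproduce the argument already used in the proof of Theorem~\ref{thm:completemetric}: given $A\in{\mathscr K}(X)$, the sets $F_n=\{f\in C(X):|f(x)|\le n\text{ for all }x\in A\}$ are closed and cover $C(X)$, so strong Baireness provides some $n_0$ with ${\rm Int}(F_{n_0})\ne\emptyset$; a Tietze-extension argument, legitimate because $X$ is normal, then forces $A\in\mathscr B$, and hence ${\mathscr K}(X)\subseteq\mathscr B$. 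Next I would use that a metric space is a $k_{\mathbb R}$-space: if $f\colon X\to\mathbb R$ satisfies $f\uhr K$ continuous for every $K\in{\mathscr K}(X)$, then $f$ is sequentially continuous (any convergent sequence together with its limit is a compact subset of $X$), and therefore continuous since $X$ is first countable. Combining this with ${\mathscr K}(X)\subseteq\mathscr B$ shows that $f\uhr B$ continuous for all $B\in\mathscr B$ implies $f$ continuous, i.e.\ $X$ is a ${\mathscr B}_{\mathbb R}$-space. Theorem~\ref{thm:completemetric} then yields that ${\mathscr T}_{\mathscr B}$ on $C(X)$ is completely metrizable.

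The only genuine content here — extracting ${\mathscr K}(X)\subseteq\mathscr B$ from strong Baireness — is already carried out inside Theorem~\ref{thm:completemetric}, so the sole (and minor) obstacle is the observation that metrizability promotes ``continuity on every compactum'' to full continuity, which is precisely what makes the ${\mathscr B}_{\mathbb R}$-space requirement free in the metric setting.
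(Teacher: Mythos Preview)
Your proposal is correct and matches the paper's intended approach: the paper states the result as an unproved corollary of Theorem~\ref{thm:completemetric}, and you have supplied exactly the missing observation, namely that for metric $X$ the hypothesis ``$X$ is a ${\mathscr B}_{\mathbb R}$-space'' is automatic once strong Baireness yields ${\mathscr K}(X)\subseteq\mathscr B$, because first countability upgrades continuity on compacta to continuity. The only cosmetic remark is that obtaining ${\rm Int}(F_{n_0})\ne\emptyset$ uses merely the Baire property, not the full strength of strongly Baire; your argument is unaffected.
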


  \begin{remark}
  The strong Baire property was firstly applied to the study of
  function space $C_k(X)$ when $X$ is a manifold in \cite{CGGM:08}.
  More precisely, it is shown in \cite{CGGM:08} when $X$ is a
  manifold, $C_k(X)$ is strongly Baire if and only if $M$ is
  metrizable.
  \end{remark}

  Recently, complete metrizability of ${\mathscr T}_{\mathscr B}^s$
  has been studied by Hol\'{a} and Caserta et al. To state their
  results, we need to induce the concept of a shield. Let $(X,d)$
  be a metric space and let $A$ be a nonempty subset of $X$. A
  superset $B$ of $A$ is called a \emph{shield} for $A$ if for any
  nonempty closed set $C$ with $B \cap C =\emptyset$ then $D(A,C)>0$,
  \cite{beer-costantini:13}. A bornology $\mathscr B$ on $(X,d)$
  is said to be shielded from closed sets if every $A\in \mathscr
  B$ has a shield in $\mathscr B$.

  \begin{theorem} [\cite{caserta-dimaio-hola:10}, \cite{caserta-dimaio-hola:12}] \label{thm:strongcomplete}
  Let $(X,d)$ be a metric space and $\mathscr B$ be a bornology
  with a closed base $\mathscr S$. Then the following statements
  are equivalent.\\
  {\rm (1)} ${\mathscr K}(X)\subseteq \mathscr B$ and $\mathscr B$
  is shielded from closed sets and has a countable base.\\
  {\rm (2)} For each complete metric space $(Y,\rho)$,
  ${\mathscr T}_{\mathscr B}^s$ on $C(X,Y)$ is completely metrizable.\\
  {\rm (3)} ${\mathscr T}_{\mathscr B}^s$ on $C(X)$ is
  completely metrizable.\\
  {\rm (4)} The space $(C(X), {\mathscr T}_{\mathscr B}^s)$ is
  \v{C}ech complete.
  \end{theorem}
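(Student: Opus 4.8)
The strategy is to close the cycle $(2)\Rightarrow(3)\Rightarrow(4)\Rightarrow(1)\Rightarrow(2)$. Two of these links are essentially known and I would quote them. The implication $(1)\Rightarrow(2)$ is the substantive direction from \cite{caserta-dimaio-hola:10} and \cite{caserta-dimaio-hola:12}: for a complete metric target $(Y,\rho)$, the countable base of $\mathscr B$ gives metrizability of $\mathscr T_{\mathscr B}^s$ on $C(X,Y)$ by Theorem \ref{thm:beer09}; $\mathscr K(X)\subseteq\mathscr B$ forces $X$ to be a $\mathscr B_{\mathbb R}$-space, so pointwise limits of Cauchy sequences stay continuous; and shieldedness of $\mathscr B$ promotes uniform Cauchyness on each $B\in\mathscr S$ to uniform Cauchyness on some enlargement $B^\delta$ — together these make $\Delta_{\mathscr B}^s$ a complete uniformity. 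The implication $(3)\Rightarrow(1)$ is likewise from \cite{caserta-dimaio-hola:10}, \cite{caserta-dimaio-hola:12}. The genuinely new content is the clause $(4)$, to be inserted via the links $(3)\Rightarrow(4)$ and $(4)\Rightarrow(1)$.

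The easy links come next. For $(2)\Rightarrow(3)$, specialise $(Y,\rho)=(\mathbb R,|\cdot|)$. For $(3)\Rightarrow(4)$, use the classical fact that every completely metrizable space is \v{C}ech complete.

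The remaining link $(4)\Rightarrow(1)$ I would route through metrizability. Assume $(C(X),\mathscr T_{\mathscr B}^s)$ is \v{C}ech complete. By the hierarchy recalled in Section \ref{sec:games} it is then sieve complete, hence almost complete, hence strongly Baire; Proposition \ref{prop:existqd} yields a $q_D$-point of $(C(X),\mathscr T_{\mathscr B}^s)$ for some dense $D\subseteq C(X)$, and Theorem \ref{thm:strongmetric} then gives that $\mathscr T_{\mathscr B}^s$ on $C(X)$ is metrizable. A metrizable \v{C}ech complete space is completely metrizable (classical; see \cite{en:89}), so $(3)$ holds, and then $(1)$ follows from the quoted implication $(3)\Rightarrow(1)$. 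Composing the links closes the cycle and yields the equivalence of $(1)$--$(4)$.

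The step I expect to be the crux is the passage from \v{C}ech completeness to metrizability inside $(4)\Rightarrow(1)$: a priori \v{C}ech completeness of $(C(X),\mathscr T_{\mathscr B}^s)$ says nothing about a countable base for $\mathscr B$, and plain Baireness would not be enough, since, as the Sorgenfrey-type examples in Section \ref{sec:metrizable} show, a Baire first countable space need not even be strongly Baire. The point is that \v{C}ech completeness really does deliver the full strength of strong Baireness, which is exactly what the $q_D$-point criterion of Theorem \ref{thm:strongmetric} consumes; after that, the classical metrization--completeness equivalence does the rest. If one wanted a self-contained proof of $(4)\Rightarrow(1)$ instead, the countable base of $\mathscr B$ and $\mathscr K(X)\subseteq\mathscr B$ could be recovered by the Baire-category arguments used in the proof of Theorem \ref{thm:completemetric}, but extracting shieldedness of $\mathscr B$ directly from \v{C}ech completeness looks to require a more delicate construction — this, rather than the bookkeeping, is where the real work would lie, and it is also why shieldedness cannot be dropped from $(1)$.
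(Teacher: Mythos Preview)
The paper does not give its own proof of this theorem: it is stated as a quoted result from \cite{caserta-dimaio-hola:10} and \cite{caserta-dimaio-hola:12}, with no argument supplied in the text. So there is no in-paper proof to compare your proposal against.

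That said, your proposed cycle is correct, and the route you take for $(4)\Rightarrow(3)$ is worth noting because it uses the machinery of \emph{this} paper rather than that of the cited sources: \v{C}ech complete $\Rightarrow$ strongly Baire (Section~\ref{sec:games}) $\Rightarrow$ existence of a $q_D$-point (Proposition~\ref{prop:existqd}) $\Rightarrow$ $\mathscr T_{\mathscr B}^s$ metrizable (Theorem~\ref{thm:strongmetric}) $\Rightarrow$ completely metrizable (metrizable $+$ \v{C}ech complete, classical). This is a genuine alternative derivation of the $(4)\Leftrightarrow(3)$ part that the present paper's results make available, and it illustrates nicely why the $q_D$-point criterion is useful. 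The links $(1)\Rightarrow(2)$ and $(3)\Rightarrow(1)$ you correctly defer to the literature; your sketch of $(1)\Rightarrow(2)$ identifies the right ingredients (countable base for metrizability via Theorem~\ref{thm:beer09}, ${\mathscr K}(X)\subseteq\mathscr B$ giving the ${\mathscr B}_{\mathbb R}$-property since metric spaces are $k$-spaces, shieldedness for completeness of $\Delta_{\mathscr B}^s$), though a full verification of the last of these is indeed the delicate step and lives in \cite{hola:12}.
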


  To see the role that the concept of a shield plays in the previous
  result, suppose that some $B_0 \in \mathscr B$ has no
  shield in $\mathscr B$. Hol\'{a} \cite{hola:12} constructs a
  Cauchy sequence $\langle f_n: n \ge 1 \rangle$ in $(C(X),
  \Delta_{\mathscr B}^s)$ which has no limit point. By applying
  this fact, one is able to show that if ${\mathscr T}_{\mathscr
  B}^s$ on $C(X)$ is completely metrizable, then $\mathscr B$ is shielded from closed sets. Note that if $\mathscr B$ is shielded
  from closed sets, then ${\mathscr T}_{\mathscr B}^s = {\mathscr T}_{\mathscr B}$ on $C(X)$. Thus, the following open question
  is of some interests.

  \begin{question}
  Let $(X,d)$ be a metric space and $\mathscr B$ be a bornology
  with a closed base $\mathscr S$. If $(C(X),{\mathscr
  T}_{\mathscr B}^s)$ is strongly Baire, must it be completely metrizable?
  \end{question}

   A topological space $T$ is called \emph{pseudo-complete}
   \cite{oxtoby:61} if it has a sequence $\langle {\mathscr B}_n:
  n \ge 1 \rangle$ of $\pi$-bases such that $\bigcap_{n\ge 1} V_n\ne
  \emptyset$ whenever $\overline{V}_{n+1} \subseteq V_n \in {\mathscr
  B}_n$ for each $n \ge 1$, and $T$ is called a \emph{$\sigma$-space} \cite{Gr:84} if it has a $\sigma$-discrete
  network.

  \begin{theorem} \label{thm:morecomplete}
  Let $(X,d)$ be a metric space and $\mathscr B$ be a bornology
  on $X$ with a closed base $\mathscr S$. The following
  statements are equivalent.\\
  {\rm (1)} ${\mathscr T}_{\mathscr B}$ on $C(X)$ is
  completely metrizable.\\
  {\rm (2)} The space $(C(X),{\mathscr T}_{\mathscr B})$ is sieve
  complete.\\
  {\rm (3)} The space $(C(X),{\mathscr T}_{\mathscr B})$ is almost
  complete.\\
  {\rm (4)} The space $(C(X),{\mathscr T}_{\mathscr B})$ is a
  pseudo-complete and $\sigma$-space.\\
  {\rm (5)} The space $(C(X),{\mathscr T}_{\mathscr B})$ is a
  pseudo-complete space with at least one $q_D$-point for some
  dense subset $D \subseteq C(X)$.
  \end{theorem}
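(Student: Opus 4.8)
The plan is to run the cycle $(1)\Rightarrow(2)\Rightarrow(3)\Rightarrow(5)\Rightarrow(1)$ together with $(1)\Rightarrow(4)\Rightarrow(5)$, using Theorem~\ref{thm:metric} as the main bridge and exploiting throughout that $(C(X),{\mathscr T}_{\mathscr B})$ is a topological group. The easy steps come first. For $(1)\Rightarrow(2)$ I would recall that a completely metrizable space is \v{C}ech complete, hence sieve complete (recorded in Section~\ref{sec:games}); $(2)\Rightarrow(3)$ is also recorded there. For $(1)\Rightarrow(4)$: a completely metrizable space is \v{C}ech complete, hence pseudo-complete by Oxtoby \cite{oxtoby:61}, while a metrizable space is a $\sigma$-space because a $\sigma$-discrete base (Bing) is in particular a $\sigma$-discrete network.

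The real content is recovering $(1)$, and I would isolate one group-theoretic lemma: \emph{if $(C(X),{\mathscr T}_{\mathscr B})$ is metrizable and has a dense completely metrizable subspace, then it is completely metrizable.} To see this, let $\widehat G$ be the (completely metrizable, abelian) topological-group completion of $(C(X),{\mathscr T}_{\mathscr B})$, so that $C(X)$ sits in it as a dense subgroup. A dense completely metrizable subspace $S$ of the metrizable space $(C(X),{\mathscr T}_{\mathscr B})$ is a dense $G_\delta$ of $\widehat G$, hence comeager there; for any $g\in\widehat G\setminus C(X)$ the translate $g+S$ is again comeager in $\widehat G$ but is contained in the coset $g+C(X)$, which is disjoint from $C(X)\supseteq S$, which is impossible in the Baire space $\widehat G$. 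Hence $C(X)=\widehat G$. I would combine this with the classical fact that, \emph{for a metrizable space}, pseudo-completeness is equivalent to the existence of a dense completely metrizable subspace (equivalently, to almost completeness).

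With these in hand, $(5)\Rightarrow(1)$ runs as follows: pseudo-completeness forces Baireness, the $q_D$-point forces metrizability by Theorem~\ref{thm:metric}, then pseudo-completeness of this metrizable space yields a dense completely metrizable subspace, and the lemma gives complete metrizability. For $(3)\Rightarrow(5)$: an almost complete space is strongly Baire, so Proposition~\ref{prop:existqd} provides a $q_D$-point, whence $(C(X),{\mathscr T}_{\mathscr B})$ is metrizable by Theorem~\ref{thm:metric}; since an almost complete Tychonoff space has a dense \v{C}ech complete $G_\delta$-subspace, which for a metrizable space is completely metrizable, the classical equivalence above shows $(C(X),{\mathscr T}_{\mathscr B})$ is pseudo-complete, giving $(5)$.

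The step I expect to need the most care is $(4)\Rightarrow(5)$, where from a pseudo-complete $\sigma$-space I must produce a $q_D$-point. My plan is first to prove the topological lemma that \emph{a regular Baire $\sigma$-space has a point of countable character}: fix a closed $\sigma$-discrete network $\bigcup_n\mathscr F_n$ (available by regularity, replacing a given $\sigma$-discrete network by the family of closures), note that each boundary $\partial F$ (with $F$ closed) is nowhere dense and that, since $\mathscr F_n$ is discrete, $Z_n:=\bigcup_{F\in\mathscr F_n}\partial F$ is again closed and nowhere dense; then $\bigcup_n Z_n$ is meager, so by the Baire property there is a point $f_\ast\notin\bigcup_n Z_n$, i.e.\ $f_\ast\in\mathrm{int}(F)$ for every network element $F$ containing $f_\ast$; as $f_\ast$ lies in at most one member of each $\mathscr F_n$, the family $\{\mathrm{int}(F): F\in\bigcup_n\mathscr F_n,\ f_\ast\in F\}$ is a countable neighbourhood base at $f_\ast$. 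A point of countable character is a $q$-point, hence a $q_D$-point with $D=C(X)$, and since a pseudo-complete space is Baire this delivers $(5)$; then $(5)\Rightarrow(1)$ closes the whole argument. (Alternatively $(4)$ can be finished directly: a point of countable character in a topological group forces first countability by homogeneity, hence metrizability by Birkhoff--Kakutani, after which the lemma of the second paragraph applies verbatim.) The main obstacle, then, is not any single implication but assembling metrizability of $(C(X),{\mathscr T}_{\mathscr B})$ in the weakest hypotheses $(4)$ and $(5)$ — and the one genuinely new topological input is the Baire $\sigma$-space lemma above, whose crux is that the $\sigma$-\emph{discrete} structure makes $\bigcup_F\partial F$ a \emph{countable} union of nowhere dense sets.
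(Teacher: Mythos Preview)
Your argument is correct, but it is organized around a different pivot than the paper's. The paper makes Corollary~\ref{coro:completemetric} (for metric $X$, complete metrizability of $(C(X),{\mathscr T}_{\mathscr B})$ is equivalent to being strongly Baire) the hub: $(1)\Leftrightarrow(2)\Leftrightarrow(3)$ follows at once from it together with ``almost complete $\Rightarrow$ strongly Baire'', and $(5)\Rightarrow(1)$ is handled by observing that a metrizable Baire space is strongly Baire and then invoking Corollary~\ref{coro:completemetric} again. You avoid Corollary~\ref{coro:completemetric} altogether and replace it by your Ra\u{\i}kov-completion lemma combined with the Aarts--Lutzer equivalence; this is a genuinely different (and self-contained) mechanism for passing from ``metrizable with a dense completely metrizable subspace'' to ``completely metrizable''. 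For $(4)\Rightarrow(5)$ the paper simply quotes van Douwen~\cite{vanDouwen:77} that every Baire $\sigma$-space contains a dense metrizable subspace and then appeals to Proposition~\ref{prop:existqd}; your boundary-of-network argument reproves by hand the weaker statement (a point of countable character) that already suffices here. The paper's route is shorter because it reuses the ${\mathscr B}_{\mathbb R}$-space machinery behind Theorem~\ref{thm:completemetric} and Corollary~\ref{coro:completemetric}; your route is more elementary in that it never touches that machinery and works purely with the group structure and the Aarts--Lutzer theorem.
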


  \begin{proof}
  (1) $\Rightarrow$ (4) is trivial.
  The equivalence of (1), (2) and (3) follows from Corollary
  \ref{coro:completemetric} and the fact that every almost complete
  space is strongly Baire.

  \medskip
  (4) $\Rightarrow$ (5). Each pseudo-complete space is Baire. It
  was shown in \cite{vanDouwen:77} that every Baire $\sigma$-space
  contains a dense metrizable subspace. Thus, if $(C(X),{\mathscr T}_{\mathscr B})$ is a pseudo-complete and $\sigma$-space, then
  it is a pseudo-complete space with at least one $q_D$-point for
  some dense subset $D \subseteq C(X)$.

  \medskip
  (5) $\Rightarrow$ (1). If $(C(X),{\mathscr T}_{\mathscr B})$ is a pseudo-complete space with with at least one $q_D$-point for some
  dense subset $D \subseteq C(X)$, by Theorem \ref{thm:metric},
  $(C(X),{\mathscr T}_{\mathscr B})$ is a metrizable and Baire
  space, and thus is strongly Baire. Thus, by Corollary
  \ref{coro:completemetric}, ${\mathscr T}_{\mathscr B}$ on $C(X)$
  is completely metrizable.
  \end{proof}

  \begin{theorem} \label{thm:strongalmost}
  Let $(X,d)$ be a metric space and $\mathscr B$ be a bornology
  with a closed base $\mathscr S$. The following statements
  are equivalent.\\
  {\rm (1)} The space $(C(X),{\mathscr T}_{\mathscr B}^s)$ is almost
  complete.\\
  {\rm (2)} The space $(C(X),{\mathscr T}_{\mathscr B}^s)$ is a
  pseudo-complete and $\sigma$-space.\\
  {\rm (3)} The space $(C(X),{\mathscr T}_{\mathscr B}^s)$ is a
  pseudo-complete space with at least one $q_D$-point for some
  dense subset $D \subseteq C(X)$.
  \end{theorem}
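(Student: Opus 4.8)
The plan is to mirror the structure of the proof of Theorem~\ref{thm:morecomplete}, replacing Corollary~\ref{coro:completemetric} and Theorem~\ref{thm:metric} with their strong-topology analogues. The equivalences here are weaker (we only aim for "almost complete" rather than "completely metrizable"), so there should be no need to invoke the shield condition of Theorem~\ref{thm:strongcomplete}; instead the cycle $(1)\Rightarrow(3)\Rightarrow(2)\Rightarrow(1)$ or $(1)\Rightarrow(2)\Rightarrow(3)\Rightarrow(1)$ should close using only metrizability (Theorem~\ref{thm:strongmetric}), the Baire/strongly Baire hierarchy recalled in Section~\ref{sec:games}, and the two cited external facts ("every Baire $\sigma$-space has a dense metrizable subspace" from \cite{vanDouwen:77}, and "every pseudo-complete space is Baire" from \cite{oxtoby:61}).

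First I would prove $(1)\Rightarrow(2)$. Suppose $(C(X),{\mathscr T}_{\mathscr B}^s)$ is almost complete. Since every almost complete Tychonoff space has a \v{C}ech complete dense $G_\delta$-subspace (recalled in Section~\ref{sec:games}), and \v{C}ech complete spaces are pseudo-complete, $(C(X),{\mathscr T}_{\mathscr B}^s)$ contains a dense pseudo-complete subspace; pseudo-completeness is inherited by spaces containing a dense pseudo-complete subspace (a $\pi$-base of the subspace extends to a $\pi$-base of the whole space), so $(C(X),{\mathscr T}_{\mathscr B}^s)$ is pseudo-complete. For the $\sigma$-space part: almost complete spaces are strongly Baire, hence contain a $q_D$-point by Proposition~\ref{prop:existqd}, hence ${\mathscr T}_{\mathscr B}^s$ is metrizable by Theorem~\ref{thm:strongmetric}; a metrizable space is trivially a $\sigma$-space. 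Next, $(2)\Rightarrow(3)$ is the exact analogue of the corresponding step in Theorem~\ref{thm:morecomplete}: a pseudo-complete space is Baire \cite{oxtoby:61}, a Baire $\sigma$-space contains a dense metrizable subspace \cite{vanDouwen:77}, and a point of a dense metrizable subspace that has a countable neighbourhood base there is a $q_D$-point of the ambient space (this is exactly the mechanism used in the proof of Proposition~\ref{prop:existqd}).

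Finally, $(3)\Rightarrow(1)$: if $(C(X),{\mathscr T}_{\mathscr B}^s)$ is pseudo-complete with a $q_D$-point, then by Theorem~\ref{thm:strongmetric} it is metrizable, and by \cite{oxtoby:61} it is Baire, hence it is a pseudo-complete metrizable space. A pseudo-complete metrizable space has a dense completely metrizable $G_\delta$-subspace (pseudo-complete metrizable spaces are known to be "almost completely metrizable"), and any space with a dense completely metrizable (hence \v{C}ech complete) subspace is almost complete. Thus $(C(X),{\mathscr T}_{\mathscr B}^s)$ is almost complete, closing the cycle. The main obstacle I anticipate is the last implication: unlike in Theorem~\ref{thm:morecomplete}, where strong Baireness plus metrizability upgrades all the way to \emph{complete} metrizability via Corollary~\ref{coro:completemetric} (which exploits the ${\mathscr B}_{\mathbb R}$-space consequence of normality), here for ${\mathscr T}_{\mathscr B}^s$ there is no such upgrade available without the shield hypothesis, so one must argue directly at the level of "almost complete" — precisely identifying the dense \v{C}ech complete subspace furnished by pseudo-completeness together with metrizability, rather than hoping for an ambient completeness statement. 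One should double-check that "pseudo-complete $+$ metrizable $\Rightarrow$ almost complete" is available in the form needed (it follows since a metrizable pseudo-complete space, being Baire, contains a dense completely metrizable subspace by a classical result on metric Baire spaces with a suitable sieve, and a dense \v{C}ech complete subspace witnesses almost completeness).
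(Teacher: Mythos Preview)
Your proposal is correct and follows essentially the same route as the paper: the cycle $(1)\Rightarrow(2)\Rightarrow(3)\Rightarrow(1)$, with metrizability coming from Theorem~\ref{thm:strongmetric} via a $q_D$-point, $(2)\Rightarrow(3)$ handled exactly as in Theorem~\ref{thm:morecomplete} using \cite{vanDouwen:77}, and $(3)\Rightarrow(1)$ closed by the Aarts--Lutzer characterization of pseudo-complete metrizable spaces (the paper cites \cite{aarts-lutzer:73}, Corollary~2.4, for precisely the fact you flagged as needing a double-check). The only cosmetic difference is that for $(1)\Rightarrow(2)$ the paper asserts ``almost complete $\Rightarrow$ pseudo-complete'' directly as a known fact rather than routing through a dense \v{C}ech-complete subspace, and extracts the $q_D$-point from that dense subspace via the second clause of Proposition~\ref{prop:existqd} rather than via ``almost complete $\Rightarrow$ strongly Baire''; both variants are valid.
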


  \begin{proof}
  The proof of (2) $\Rightarrow$ (3) is similar to that of
  (4) $\Rightarrow$ (5) in Theorem \ref{thm:morecomplete}.

  \medskip
  (1) $\Rightarrow$ (2). Every Tychonoff almost complete space
  is pseudo-complete and has a \v{C}ech complete dense
  $G_\delta$-subspace. Thus, if $(C(X), {\mathscr T}_{\mathscr B}^s)$
  is almost complete, by Proposition \ref{prop:existqd} and Theorem
  \ref{thm:strongmetric}, it is pseudo-complete and metrizable.

  \medskip
  (3) $\Rightarrow$ (1). Assume (3) holds. By Theorem
  \ref{thm:strongmetric}, $(C(X), {\mathscr T}_{\mathscr B}^s)$
  is metrizable. It was shown in Corollary 2.4 of \cite{aarts-lutzer:73} that a metrizable space is
  pseudo-complete if and only if it has a dense completely
  metrizable subspace.
  \end{proof}

  \begin{remark}
  Theorem \ref{thm:completemetric}, Corollary
  \ref{coro:completemetric}, Theorem \ref{thm:morecomplete} and
  Theorem \ref{thm:strongalmost} can be re-stated in term of an
  arbitrary metric space $(Y,\rho)$ just like that in Theorem 3.1
  of \cite{hola:12}.
  \end{remark}

  \section{The Baire property}

  In this section, we provides some results on the Baire property
  of ${\mathscr T}_{\mathscr B}^s$. In general,
  it is not an easy task to characterize when a function space
  has the Baire property. The problem for $C_p(X)$ was solved
  independently by Pytkeev \cite{pytkeev:85}, Tkachuk
  \cite{tkachuk:85} and van Douwen \cite{vandouwen:94}, and the
  problem for $C_k(X)$ was solved for locally compact $X$ by
  Gruenhage and Ma \cite{gruenhage:97}.

  \medskip
  A subset $A$ of a topological space $X$ is \emph{relatively
  pseudocompact} if $f(A)$ is bounded in $\mathbb R$ for all $f\in
  C(X)$. For a metric space $(X,d)$ and a bornology $\mathscr B$
  on $X$ with a closed base $\mathscr S$, Hol\'{a} \cite{hola:12}
  showed that if $(C(X), {\mathscr T}_{\mathscr B}^s)$ is a Baire space, then every relatively pseudocompact subset of $X$ is
  in $\mathscr B$. Consequently, if $(C(X),
  {\mathscr T}_{\mathscr B}^s)$ is Baire, ${\mathscr T}_k
  \subseteq {\mathscr T}_{\mathscr B}^s$. A classical
  result in \cite{mccoy-natantu:88} states that for a first
  countable paracompact space $X$, $C_k(X)$ is Baire if and only
  if $X$ is locally compact. Next, we provides two examples to
  show that this result fails for functions spaces defined in
  terms of bornologies.

  \begin{example} \label{exam:non-baire}
  \emph{There exist a locally compact metric space $(X,d)$ and a
  bornology $\mathscr B$ on $(X,d)$ with a closed base such that
  $(C(X), {\mathscr T}_{\mathscr B})$ is not a Baire space.} Let
  $X = \bigcup_{\alpha < \omega_1} (0,1) \times \{\alpha\}$,
  where $(0, 1)$ is the open unit interval. For each $n\ge 1$,
  let $D_n \subseteq X$ be the subset defined by
  \[
  D_n= \left\{
  \frac{m}{2^n}: 0<m<2^n \right\} \times \omega_1.
  \]
  Define a metric $d$ on $X$ such that for any $(x,\alpha),
  (y, \beta) \in X$,
  \[
  d((x,\alpha), (y,\beta))=\left \{
  \begin{array}{ll}
  |x-y|, & \mbox{if $\alpha -\beta$;}\\[0.5em]
  2, & \mbox{if $\alpha \not= \beta$.}
  \end{array}
  \right.
  \]
  Obviously, $(X,d)$ is a locally compact metric space.
  Let ${\mathscr B}$ be the bornology on $X$ generated by
  ${\mathscr K}(X) \cup \{D_n: n \ge 1\}$.

  \medskip
  {\bf Claim.} \emph{The space $(C(X), {\mathscr T}_{\mathscr B})$
  is not  Baire}. We shall construct a winning strategy $\sigma$
  for player $\beta$ in the Choquet game played in $(C(X), {\mathscr
  T}_{\mathscr B})$. Without loss of generality, we assume that
  players $\beta$ and $\alpha$ will choose basic open sets of
  the form $[A; \varepsilon](f)$, where $A \subseteq X$ is the
  union of a compact set and some $D_n$, $n\in \mathbb N$. Let
  $\sigma(\emptyset)=U_1=[B_1; 1](g_1)$ with $B_1 \supseteq D_1$.
  At the $n$-th round with $n\ge 1$, $\beta$ will have played
  $U_i=[B_i; \delta_i](g_i)$ and $\alpha$ will have played
  $V_i = [A_i;\varepsilon_i](f_i)$ with $1\le i \le n$ such that
  $V_i \supseteq U_{i+1}
  \supseteq V_{i+1}$ for each $1\le i<n$. Choose $k_{n+1} \in
  \mathbb N$ such that $A_n \setminus D_{k_{n+1}-1}$ is contained
  in a compact set. Let $I_{n+1}$ be a finite subset of $\omega_1$
  such that $A_n \setminus ((0, 1)\times I_{n+1})$ is a subset
  of $D_{k_{n+1}-1}$.
  Let $C_{n+1}= \left(D_{k_{n+1}} \setminus D_{k_{n+1}-1}\right)
  \times (\omega_1 \setminus I_{n+1})$. Then $C_{n+1}$ and $A_n$
  are disjoint nonempty closed subsets of $X$, thus we can find a
  function $g_{n+1} \in C(X)$ such that $g_{n+1} \uhr_{A_n} =
  f_n\uhr_{A_n}$ and $g_{n+1}({C_{n+1}})=\{n\}$. Let
  $\delta_{n+1} < \frac{1}{2} \varepsilon_n$, and define
  \[
  \sigma(V_1,...,V_n)=U_{n+1} =[A_n\cup D_{k_{n+1}};\delta_{n+1}]
  (g_{n+1})
  \]
  Then $U_{n+1} \subseteq V_n$.

  \medskip
  Let $\langle V_n: n \ge 1 \rangle$ be any $\sigma$-sequence of
  player $\alpha$'s moves, where $V_n=[A_n; \varepsilon_n](f_n)$
  for each $n\in \mathbb N$. Let $\mu< \omega_1$ be such that $(0,1)
  \times \{\mu\}$ does not intersect any of the compact sets that
  appear in $A_n$ for each $n\in \mathbb N$. If there exists a
  continuous function $g \in \bigcap_{n\ge 1} \sigma(V_1,...,V_n)$,
  then $g\uhr_{\left[\frac{1}{4}, \frac{3}{4}\right] \times \{\mu\}}$
  would be a continuous unbounded function on a compact set. This
  is a contradiction, which implies $\bigcap_{n \ge 1} \sigma(U_1,
  ...,U_n) =\emptyset$. Hence, $\sigma$ is a winning strategy for
  $\beta$.
  \end{example}

  \begin{example}
  \emph{There exist a locally compact metric space $(X,d)$ and a
  bornology $\mathscr B$ on $(X,d)$ with a closed base such that
  $(C(X), {\mathscr T}_{\mathscr B}^s)$ is not a Baire space.}
  The space $(X,d)$ is exactly the same as the one defined in
  Example~\ref{exam:non-baire}. But, the bornology $\mathscr B$
  is different from the one in Example~\ref{exam:non-baire}.
  To describe it, we need to set some notation to work with the
  complement of the Cantor set. The first interval is denoted
  by $(a_\emptyset, b_\emptyset)$ and the length of the interval
  is $\frac{1}{3}$. In the second stage, we have two new intervals
  $(a_0, b_0)$ and $(a_1,b_1)$, where $b_0< a_\emptyset$ and
  $b_\emptyset < a_1$, each of length $\frac{1}{3^2}$. In the
  $n+1$ stage, there are $2^n$ many intervals with length
  $\frac{1}{3^{n+1}}$ and each $p\in \{0,1\}^n$ corresponds to
  an interval $(a_p, b_q)$. For each $p\in \{0,1\}^n$, the two
  intervals that are closer to $(a_p, b_p)$ in the $n+2$ stage
  are $(a_{p^\wedge 0}, b_{p^ \wedge 0})$ at its left and
  $(a_{p^{\wedge}1}, b_{p^{\wedge}1})$ at its right. This gives
  $2^{n+1}$ many intervals $(a_q,b_q)$ with length $\frac{1}
  {n+2}$ in the $n+2$ stage, where $q\in \{0,1\}^{n+1}$. For
  notational convenience, let $\{0,1\}^0=\{\emptyset\}$
  and the length $p\in\{0,1\}^n$ is denoted by $\|p\|$. Of
  course, $\|\emptyset\|=0$. For each $n\ge 1$ and $p\in
  \bigcup_{0\le m\leq n} \{0, 1\}^m$, define
  \[
  E_{n, p}= \left[a_p-\sum_{k\ge \|p\|}^n\frac{1}{3^{k+3}},\
  b_p+ \sum_{k\ge \|p\|}^n\frac{1}{3^{k+3}}
  \right].
  \]
  Note that $E_{n,p} \subset E_{m,p}$ when $\|p\|\le n<m$. Now,
  for each $n\ge 0$, put
  \[
  E_n= \bigcup \left\{ E_{n,p}: p \in \{0,1\}^m \mbox{ and }
  0\le m \le n \right\} \ \ \mbox{and}\ \ \
  D_n= E_n \times \omega_1.
  \]
  Let $\mathscr B$ be the bornology on $X$ generated by
  ${\mathscr K}(X) \cup \{D_n: n \ge 0\}$.

  \medskip
  {\bf Claim.} \emph{The space $(C(X), {\mathscr T}_{\mathscr B}^s)$
  is not  Baire}. We shall construct a winning strategy $\sigma$
  for player $\beta$ in $Ch(C(X), {\mathscr T}_{\mathscr B}^s)$.
  Without loss of generality, we assume that players $\beta$ and $\alpha$ will choose basic open sets of the form $[A; \varepsilon]^s(f)$, where $A \subseteq X$ is the union of a
  compact set and some $D_n$, $n\ge 0$. Let $\sigma(\emptyset)=
  U_1=[B_1; 1]^s(g_1)$ with $B_1 \supseteq D_1$. At the $n$-th
  round with $n\ge 1$, $\beta$ will have played $U_i=[B_i;
  \delta_i]^s(g_i)$ and $\alpha$ will have played $V_i = [A_i;
  \varepsilon_i]^s(f_i)$ with $1\le i \le n$ such that
  $V_i \supseteq U_{i+1} \supseteq V_{i+1}$ for each $1\le i<n$.
  Choose $k_{n+1} \in \mathbb N$ such that $A_n \setminus
  E_{k_{n+1}-1}$ is contained in a compact set. Let $I_{n+1}$
  be a finite subset of $\omega_1$ such that $A_n \setminus ((0, 1)\times I_{n+1})$ is a subset of $D_{k_{n+1}-1}$. Let
  \[
  C_{n+1}= \left(\bigcup \left\{E_{k_{n+1}, p}: p\in \{0,
  1\}^{k_{n+1}} \right\}\right)\times (\omega_1 \setminus I_{n+1}).
  \]
  Then $C_{n+1}$ and $A_n$
  are disjoint nonempty closed subsets of $X$, thus we can find a
  function $g_{n+1} \in C(X)$ such that $g_{n+1} \uhr_{A_n} =
  f_n\uhr_{A_n}$ and $g_{n+1}({C_{n+1}})=\{n\}$. Let
  $\delta_{n+1} < \frac{1}{2} \varepsilon_n$, and define
  \[
  \sigma(V_1,...,V_n)=U_{n+1} =[A_n\cup D_{k_{n+1}};\delta_{n+1}]^s
  (g_{n+1})
  \]
  Then $U_{n+1} \subseteq V_n$.

  \medskip
  Let $\langle V_n: n \ge 1 \rangle$ be any $\sigma$-sequence of
  player $\alpha$'s moves, where $V_n=[A_n; \varepsilon_n]^s(f_n)$
  for each $n\in \mathbb N$. Let $\mu< \omega_1$ be such that $(0,1)
  \times \{\mu\}$ does not intersect any of the compact sets that
  appear in $A_n$ for each $n\in \mathbb N$. If there exists a
  continuous function $g \in \bigcap_{n\ge 1} \sigma(V_1,...,V_n)$,
  then $g\uhr_{\left[\frac{1}{9}, \frac{8}{9}\right] \times \{\mu\}}$
  would be a continuous unbounded function on a compact set. This
  is a contradiction, which implies $\bigcap_{n \ge 1} \sigma(U_1,
  ...,U_n) =\emptyset$. Hence, $\sigma$ is a winning strategy for
  $\beta$.
  \end{example}

  Let $\mathscr B$ be bornology on a metric space $(X,d)$. We consider
  the game ${\mathscr G}_{\mathscr B}^s(X)$ played on $(X,d)$
  between two players $\beta$ and $\alpha$. Players $\beta$ and
  $\alpha$ take turn to choose elements from $\mathscr B$ as follows:
  $\beta$ starts first and chooses $B_1 \in {\mathscr B}$, and $\alpha$
  responds by choosing $A_1\in \mathscr B$. At stage $n$ ($n\ge 2$),
  $\beta$ chooses a member $B_n \in \mathscr B$ which is far from
  $A_{n-1}$ and $\alpha$ responds by choosing another member $A_n\in
  \mathscr B$. The game is played continuously in this pattern and a
  play $\langle (B_n, A_n): n\ge 1\rangle$ is produced. We say that
  player $\alpha$ \emph{wins} the play $\langle (B_n, A_n): n\ge 1
  \rangle$ if the sequence $\langle B_n: n\ge 1 \rangle$ has an open
  discrete expansion in $X$. Otherwise, $\beta$ is said to have won
  the play.

  \begin{theorem} \label{thm:baire}
  Let $(X,d)$ be a metric space and let $\mathscr B$ be a bornology
  on $X$ with a closed base. If $(C(X), {\mathscr T}_{\mathscr B}^s)$
  is a Baire space, $\beta$ does not have a winning strategy in
  ${\mathscr G}_{\mathscr B}^s(X)$.
  \end{theorem}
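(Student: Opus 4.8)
The plan is to argue by contradiction, so suppose that $\beta$ has a winning strategy $\sigma$ in the game ${\mathscr G}_{\mathscr B}^s(X)$. I shall construct a winning strategy $\theta$ for $\beta$ in the Choquet game $Ch(C(X),{\mathscr T}_{\mathscr B}^s)$; then $(C(X),{\mathscr T}_{\mathscr B}^s)$ is not Baire by Theorem~\ref{thm:krom}, contrary to hypothesis. In the Choquet game $\beta$ will play only basic open sets $[A;\delta]^s(g)$ with $A\in{\mathscr B}$, $g\in C(X)$ and $\delta>0$; since such sets form a base, whenever $\alpha$ plays a nonempty open set $V_n$ the strategy $\theta$ first fixes, via a choice function chosen once and for all, a basic open subset $[A_n;\mu_n]^s(h_n)\subseteq V_n$ and then responds with a set contained in it, so its move automatically lies inside $V_n$.

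I define $\theta$ by recursion. For the opening move put $B_1=\sigma(\emptyset)$ and $\theta(\emptyset)=U_1=[B_1;1]^s(g_1)$, where $g_1$ is the constant function $3$. Suppose that after $n$ rounds the shrinkings of $\alpha$'s moves are $[A_k;\mu_k]^s(h_k)$ $(1\le k\le n)$ and that, with $B_{k+1}:=\sigma(A_1,\dots,A_k)$, the sequence $\langle B_1,A_1,\dots,B_n,A_n\rangle$ is a legal partial play of ${\mathscr G}_{\mathscr B}^s(X)$ in which $\beta$ has followed $\sigma$. Since $B_{n+1}=\sigma(A_1,\dots,A_n)$ is a legal move of $\beta$, it is far from $A_n$, so $\eta_n:=\tfrac13 D(A_n,B_{n+1})>0$. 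Choose deterministically a continuous function $\psi_n\colon X\to[0,1]$ which is $0$ on $\{x:d(x,A_n)\le\eta_n\}$ and $1$ on $\{x:d(x,B_{n+1})\le\eta_n\}$ (these two sets are disjoint because $D(A_n,B_{n+1})=3\eta_n$), set
\[
g_{n+1}=(1-\psi_n)\,h_n+3(n+1)\,\psi_n,
\]
pick $0<\delta_{n+1}<\min\{\mu_n,1\}$, and let $\theta(V_1,\dots,V_n)=U_{n+1}=[A_n\cup B_{n+1};\delta_{n+1}]^s(g_{n+1})$ (note $A_n\cup B_{n+1}\in{\mathscr B}$). Then $g_{n+1}\equiv h_n$ on the enlargement $A_n^{\eta_n}$ and $g_{n+1}\equiv 3(n+1)$ on $B_{n+1}$; the first fact together with $\delta_{n+1}\le\mu_n$ gives $U_{n+1}\subseteq[A_n;\mu_n]^s(h_n)\subseteq V_n$, so the move is legal. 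This completes the recursion.

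It remains to see that $\theta$ wins. Let $\langle V_n:n\ge1\rangle$ be any run of $\alpha$ against $\theta$, and let $\langle(B_n,A_n):n\ge1\rangle$ be the associated play of ${\mathscr G}_{\mathscr B}^s(X)$ produced by the recursion ($A_n$ the bornology coordinate of the shrinking of $V_n$, $B_n$ the $\sigma$-responses); since $\sigma$ is winning, $\beta$ wins it, i.e.\ $\langle B_n:n\ge1\rangle$ admits no open discrete expansion in $X$. Assume for contradiction that $f\in\bigcap_{n\ge1}U_n$. For every $n\ge1$ the set $U_n$ has the form $[A;\delta]^s(g_n)$ with $B_n\subseteq A$, $\delta\le1$ and $g_n\equiv 3n$ on $B_n$ (read $\delta=1$, $g_1\equiv 3$ when $n=1$), so $|f(x)-3n|<1$ for all $x\in B_n$, whence $f(B_n)\subseteq(3n-1,3n+1)$. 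Put $O_n=f^{-1}\bigl((3n-1,3n+1)\bigr)$; each $O_n$ is open and contains $B_n$. The family $\{O_n:n\ge1\}$ is discrete: given $x\in X$, choose a neighbourhood $W$ of $x$ with $\diam f(W)<\tfrac14$; if $W$ met $O_m$ and $O_{m'}$ for distinct $m,m'$, then $f(W)$ would meet both $(3m-1,3m+1)$ and $(3m'-1,3m'+1)$, whence $|3m-3m'|<\tfrac52<3$, which is absurd. Thus $\langle O_n\rangle$ is an open discrete expansion of $\langle B_n\rangle$, a contradiction. Hence $\bigcap_{n\ge1}U_n=\emptyset$, so $\theta$ is a winning strategy for $\beta$, completing the proof.

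The heart of the argument — and the point where the strength of ${\mathscr T}_{\mathscr B}^s$ and the possible non-local-compactness of $X$ are felt — is the nesting $U_{n+1}\subseteq V_n$: it forces $g_{n+1}$ to coincide with $h_n$ not merely on $A_n$ but on a whole enlargement $A_n^{\eta_n}$, and it is exactly the far-ness (guaranteed by legality of $\sigma$'s move) of $B_{n+1}$ from $A_n$ that supplies the room to raise $g_{n+1}$ to the constant value $3(n+1)$ on $B_{n+1}$ while keeping it equal to $h_n$ near $A_n$. The verification of that containment, the fact that the auxiliary choices can be made deterministically, and the routine check that ``$B_{n+1}$ far from $A_n$'' forces the relevant metric enlargements to be separated, are all elementary.
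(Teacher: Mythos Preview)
Your proof is correct and follows essentially the same approach as the paper's: both translate a strategy for $\beta$ in ${\mathscr G}_{\mathscr B}^s(X)$ into a strategy for $\beta$ in the Choquet game on $(C(X),{\mathscr T}_{\mathscr B}^s)$, using the far-ness of $\sigma$'s responses to push the center functions to separated values on the successive $B_n$'s, and then read off a discrete open expansion from any surviving $f$. The only difference is cosmetic---you argue by contradiction (assume $\sigma$ winning, produce a winning Choquet strategy, contradict Baireness) whereas the paper argues directly (arbitrary $\sigma$, build $\tau$, use Baireness to defeat $\tau$, extract a play defeating $\sigma$); your choice of the values $3n$ rather than $n$ and the explicit Urysohn interpolation in place of Tietze are likewise inessential variants.
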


  \begin{proof}
  Suppose that $\sigma$ is a strategy for $\beta$ in ${\mathscr
  G}_{\mathscr B}^s(X)$. We shall show that $\sigma$ is not a
  winning strategy for $\beta$. To achieve this, we define
  inductively a strategy $\tau$ for player $\beta$ in the
  Choquet game $Ch((C(X),{\mathscr T}_{\mathscr B}^s))$ and then
  apply the Baireness of $(C(X),{\mathscr T}_{\mathscr B}^s)$.

  \medskip
  First, let $f_1 = 1$ be the constant function and define
  $\tau(\emptyset):=\left[\sigma(\emptyset); \frac{1}{2}\right]^s
  (f_1)$. Suppose that $V_1 \subseteq \tau(\emptyset)$ is player
  $\alpha$'s first move in $Ch((C(X), {\mathscr T}_{\mathscr B}^s))$.
  Then, there exists a basic open set $U_1=[A_1; \varepsilon_1]^s
  (g_1)\subseteq V_1$, where $A_1\in {\mathscr S}$, $\varepsilon>0$
  and $g_1\in C(X)$. Since $U_1\subseteq \tau(\emptyset)$, without
  loss of generality, we can require $0 \le \varepsilon_1 \le
  \frac{1}{2}$ and $\sigma(\emptyset)\subseteq A_1$. Furthermore,
  since $\sigma(A_1) \cap \overline{A_1^{\delta_1}} =\emptyset$
  for some $\delta_1 >0$, by the Tietz extension theorem, there
  exists a function $f_2 \in C(X)$ such that $f_2\uhr_{A_1^{\delta_1}}
  = g_1 \uhr_{A_1^{\delta_1}}$ and $f_2(\sigma(A_1))=\{2\}$. Next,
  we define
  \[
  \tau(V_1) =\left[A_1 \cup \sigma(A_1); \frac{1}{2}\varepsilon_1
  \right]^s(f_2).
  \]
  Then $\tau(V_1) \subseteq U_1 \subseteq V_1 \subseteq
  \tau(\emptyset)$.

  \medskip
  Suppose that we have constructed $\tau$ for all finite legal
  moves of $\alpha$ with length $n$ ($n\ge 1$) in $Ch((C(X),
  {\mathscr T}_{\mathscr B}^s))$. In particular, for each sequence
  $\langle V_1,...,V_n\rangle$ of $\alpha$'s legal moves, there
  are finite sequences $\langle A_1,...,A_n \rangle$ in $\mathscr
  S$, $\langle g_1, ..., g_n\rangle$ and
  $\langle f_1, ..., f_{n+1}\rangle$ in  $C(X)$, $\langle
  \varepsilon_1, ..., \varepsilon_n \rangle$ and $\langle
  \delta_1, ..., \delta_n \rangle$ of positive real numbers, and
  $\langle U_1,...,U_n \rangle$ in ${\mathscr T}_{\mathscr B}^s$
  such that\\
  (1) $\displaystyle 0<\varepsilon_k \le \frac{1}{2^k}$ for all
  $1\le k\le n$;\\
  (2) $\langle A_k: 1 \le k \le n\rangle$ is a $\sigma$-sequence
  with $\sigma(A_1,...,A_{k-1})\subseteq A_k$ for all $1\le k\le
  n$;\\
  (3) $U_k = [A_k; \varepsilon_k]^s(g_k)$ for all
  $1\le k\le n$;\\
  (4) $U_k \subseteq V_k$ for all $1\le k\le n$;\\
  (5) $f_{k+1} \uhr_{A_k^{\delta_k}} = g_k \uhr_{A_k^{\delta_k}}$
  and $f_{k+1}(\sigma(A_1,...,A_k))=\{k+1\}$ for all $1\le k \le n$;\\
  (6) $\displaystyle \tau(U_1,...,U_k)=\left[A_k \cup \sigma(A_1,
  ...,A_k); \frac{1}{2}\varepsilon_k \right]^s (f_{k+1})$ for all
  $1\le k\le n$.\\
  Note that (3)--(6) imply that\\
  (7) $\tau(V_1,...,V_k) \subseteq U_k \subseteq V_k$ for all
  $1\le k\le n$.\\
  Now, let $\langle V_1,...,V_n, V_{n+1} \rangle$ be a finite
  sequence of legal moves of $\alpha$ with length $n+1$ in
  $Ch((C(X),{\mathscr T}_{\mathscr B}^s))$. We choose a basic
  open set $U_{n+1}$ in ${\mathscr T}_{\mathscr B}^s$
  such that
  \[
  U_{n+1}=[A_{n+1}; \varepsilon_{n+1}]^s(g_{n+1}) \subseteq V_{n+1},
  \]
  where $A_{n+1} \in \mathscr S$ such that $A_n \cup \sigma(A_1,
  ...,A_n)\subseteq A_{n+1}$ and $0<\varepsilon_{n+1}\le\frac{1}{2}
  \varepsilon_n$. Since $\sigma(A_1,...,A_{n+1}) \cap
  \overline{A_{n+1}^{\delta_{n+1}}} = \emptyset$ for some
  $\delta_{n+1}>0$, by the Tietz extension theorem, there exists
  a function $f_{n+1} \in C(X)$ such that $f_{n+1} \uhr_{A_{n+1}^{\delta_{n+1}}} = g_{n+1}
  \uhr_{A_{n+1}^{\delta_{n+1}}}$ and $f_{n+2}(\sigma(A_1,
  ...,A_{n+1}))=\{n+2\}$. Then, we define
  \[
  \tau(V_1,...,V_n,V_{n+1}) =\left[A_{n+1}\cup\sigma(A_1,...,
  A_{n+1}); \frac{1}{2} \varepsilon_{n+1}\right]^s(f_{n+2}).
  \]
  Then, by construction, we have that $\tau(V_1,...,V_n,V_{n+1})
  \subseteq U_{n+1}\subseteq V_{n+1}$. Thus, we have defined
  inductively the strategy $\tau$ for $\beta$ in $Ch(C(X),
  {\mathscr T}_{\mathscr B}^s)$.

  \medskip
  Since $(C(X), {\mathscr T}_{\mathscr B}^s)$ is a Baire space,
  by Theorem \ref{thm:krom}, $\tau$ is not a winning strategy
  for $\beta$ in $Ch(C(X), {\mathscr T}_{\mathscr B}^s)$.
  Therefore, there exists an infinite sequence $\langle V_n:
  n \ge 1 \rangle$ of $\alpha$'s legal moves such that
  \[
  \bigcap_{n \ge 1} V_n = \bigcap_{n\ge 1} \tau(V_1,...,V_n)
  \ne \emptyset.
  \]
  Let $f \in \bigcap_{n \ge 1} V_n$. Furthermore, let $\langle
  A_n: n\ge 1\rangle$, $\langle g_n: n\ge 1\rangle$, $\langle
  f_n: n\ge 1\rangle$, $\langle \varepsilon_n: n\ge 1 \rangle$,
  $\langle \delta_n: n\ge 1\rangle$ and $\langle U_n: n\ge 1
  \rangle$ be associated sequences such that for each $n\ge 1$,
  (1)--(6) are satisfied. Then (7) is also satisfied for all
  $k\ge 1$, which implies that $f \in U_n$ for all $n\ge 1$.
  Thus, for each $n\ge 1$, we have $|f(x) -f_{n}(x)| <\frac{1}
  {2^{n}}$ for all $x \in A_{n-1} \cup \sigma(A_1,...,A_{n-1})$.
  Since $f(\sigma(A_1,...,A_{n-1})) =\{n\}$, then
  \[
  n -\frac{1}{2^n} < f(x) < n +\frac{1}{2^n}
  \]
  for all $x \in \sigma(A_1,...,A_{n-1})$. It follows that
  $\{f(\sigma(\emptyset))\} \cup \{f(\sigma(A_1,...,A_n)):
  n\ge 1\}$ is a discrete family on $\mathbb R$, which has an
  open discrete expansion in $\mathbb R$. Since $f$ is continuous, $\{\sigma(\emptyset)\} \cup\{\sigma(A_1,...,A_n): n\ge 1\}$
  has an open discrete expansion in $X$. This implies that
  $\sigma$ is not a winning strategy for $\beta$ in ${
  \mathscr G}_{\mathscr B}^s(X)$.
  \end{proof}

  Given two collections $\mathscr C$ and $\mathscr D$ of nonempty
  subsets of a topological space $X$, $\mathscr C$ is called a
  \emph{moving off collection over $\mathscr D$} if, for
  each $D \in \mathscr D$, there exists $C \in \mathscr C$
  with $C \cap D = \emptyset$. The space $X$ is said to have
  the \emph{$(\mathscr C, \mathscr D)$-moving off property} if
  any subset ${\mathscr A} \subseteq {\mathscr C}$, which is a
  moving off collection over $\mathscr D$, contains an infinite
  subcollection which has a discrete open expansion in $X$.

  \begin{proposition} \label{prop:movingoff}
  Let $(X,d)$ be a metric space and let $\mathscr B$ be a
  bornology on $X$. If player $\beta$ does not have a winning
  strategy in the ${\mathscr G}_{\mathscr B}^s(X)$-game, then
  $(X,d)$ has the $(\bigcup_{\delta>0}{\mathscr B}^\delta,
  \mathscr B)$-moving off property.
  \end{proposition}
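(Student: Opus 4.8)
The plan is to prove the contrapositive: assuming $X$ does not have the $\big(\bigcup_{\delta>0}\mathscr B^\delta,\mathscr B\big)$-moving off property, I will build a winning strategy for $\beta$ in $\mathscr G_{\mathscr B}^s(X)$, which contradicts the hypothesis. So fix $\mathscr A\subseteq\bigcup_{\delta>0}\mathscr B^\delta$ which is moving off over $\mathscr B$ but has no infinite subcollection with a discrete open expansion in $X$, and for each $E\in\mathscr A$ fix once and for all a representation $E=(A_E)^{\delta_E}$ with $A_E\in\mathscr B$ and $\delta_E>0$; note $A_E\subseteq E$ and $A_E\neq\emptyset$.

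I would then let $\beta$ play as follows. Pick any $E_1\in\mathscr A$ and set $\sigma(\emptyset):=A_{E_1}=:B_1$. Inductively, after $\alpha$ answers with $A_n\in\mathscr B$, the set $A_n\cup A_{E_1}\cup\dots\cup A_{E_n}$ lies in $\mathscr B$ (finite unions), so by the moving off property there is $E_{n+1}\in\mathscr A$ disjoint from it; set $\sigma(A_1,\dots,A_n):=A_{E_{n+1}}=:B_{n+1}$. This is a legal move, because $E_{n+1}=(A_{E_{n+1}})^{\delta_{E_{n+1}}}$ being disjoint from $A_n$ forces $D(B_{n+1},A_n)\ge\delta_{E_{n+1}}>0$, i.e. $B_{n+1}$ is far from $A_n$. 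Moreover, since each $E_{n+1}$ misses all of $A_{E_1},\dots,A_{E_n}$ (which are nonempty), the $E_n$ are pairwise distinct, so $\{E_n:n\ge1\}$ is an infinite subcollection of $\mathscr A$; and for $j<n$ the same disjointness yields $D(B_j,B_n)\ge\delta_{E_n}$.

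Finally I must verify that $\sigma$ is winning, i.e. that for every run $\langle(B_n,A_n)\rangle$ consistent with $\sigma$ the sequence $\langle B_n\rangle$ has no discrete open expansion. Suppose $\langle W_n\rangle$ were one; I would derive a contradiction by turning it into a discrete open expansion of an infinite subcollection of $\{E_n\}\subseteq\mathscr A$. From $\langle W_n\rangle$ discrete one gets $\{\overline{B_n}\}$ discrete; combining this with the separation $D(B_j,B_n)\ge\delta_{E_n}$ ($j<n$), I would pass to an infinite set of indices $n_1<n_2<\dots$ along which the radii $\delta_{E_{n_k}}$ are monotone and the cores $B_{n_k}$ are separated well beyond the sum of the relevant radii, and then expand the $W_{n_k}$ (or replace them by controlled enlargements of the $B_{n_k}$) to open sets containing $E_{n_k}=(B_{n_k})^{\delta_{E_{n_k}}}$ while keeping the family discrete; this would contradict the choice of $\mathscr A$. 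I expect this final step to be the main obstacle, and the reason is a genuine mismatch between the two objects: the game only guarantees that $B_{n+1}$ is far from $\alpha$'s \emph{core}-sized move $A_n$ (so the $B_n$'s are separated only by the borderline amounts $\delta_{E_n}$), whereas to recover a discrete expansion of the \emph{enlargements} $E_n$ one needs extra room. It may well be necessary to have $\beta$ play, in place of the bare core $A_{E_n}$, a more cleverly chosen $\mathscr B$-subset of $E_n$ lying close to a common accumulation point forced by $\mathscr A$ having no infinite discrete subcollection, so that a hypothetical discrete expansion of $\langle B_n\rangle$ is automatically incompatible with that accumulation; this adjustment is the delicate point of the argument.
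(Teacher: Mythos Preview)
Your contrapositive approach and the strategy you build for $\beta$ are essentially the paper's. The paper is even terser: at stage $n+1$ it simply picks $B_{n+1}\in\mathscr B$ and $\delta>0$ with $B_{n+1}^\delta\in\mathscr A$ and $B_{n+1}^\delta\cap A_n=\emptyset$ (possible since $\mathscr A$ moves off over $\mathscr B$), sets $\sigma(A_1,\dots,A_n)=B_{n+1}$, and then closes with the single sentence ``Since $\{\sigma(A_1,\dots,A_n):n\ge1\}$ does not have a discrete open expansion, $\sigma$ is a winning strategy for $\beta$.'' No further argument is given.

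The step you single out as ``the main obstacle''---passing from a hypothetical discrete open expansion of the cores $\{B_n\}$ to one for an infinite subfamily of the enlargements $\{B_n^{\delta_n}\}\subseteq\mathscr A$---is thus exactly the step the paper treats as immediate. Your concern is legitimate: $B_n\subseteq W_n$ with $\{W_n\}$ discrete gives no control over $B_n^{\delta_n}$ when the $\delta_n$ are unconstrained, and your extra bookkeeping (forcing $E_{n+1}$ to miss all earlier cores) does not obviously close the gap either. So your proposal is incomplete precisely where you say it is; the paper's proof simply asserts the same conclusion without flagging or justifying it. If your aim is to match the paper's argument, you may stop where it does; if your aim is a fully rigorous proof, the missing implication still has to be supplied, and neither your sketch nor the paper provides it.
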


  \begin{proof}
  Suppose that $(X,d)$ does not have the $(\bigcup_{\delta>0}
  {\mathscr B}^\delta, \mathscr B)$-moving off property. Then,
  there exists a moving off collection ${\mathscr A} \subseteq
  \bigcup_{\delta >0} {\mathscr B}^\delta$ no infinite subset
  of which has a discrete open expansion. Now, we can construct
  a winning strategy $\sigma$ for $\beta$ in ${\mathscr
  G}_{\mathscr B}^s(X)$. Let $\sigma(\emptyset)$
  be an arbitrary member of $\mathscr B$. Assume that we have
  defined $\sigma$ for player $\alpha$'s all finite legal moves
  $\langle A_1,...,A_n \rangle$ of length $n\ge 1$. We choose
  $B_{n+1} \in \mathscr B$ and $\delta>0$ such that $B_{n+1}^\delta
  \in \mathscr A$ and $B_{n+1}^\delta \cap A_n =\emptyset$, and
  define $\sigma(A_1,...,A_n) =B_{n+1}$. Since $\{ \sigma(A_1,
  ...,A_n): n\ge 1 \}$ does not have a discrete open expansion,
  $\sigma$ is a winning strategy for $\beta$. This is a
  contradiction.
  \end{proof}

  By results \cite{gruenhage:97}, if a metric space $(X,d)$ has
  the $({\mathscr K}(X),{\mathscr K}(X))$-moving off property,
  then $(X,d)$ must be locally compact, and $C_k(X)$ is a
  Baire space if and only if $(X,d)$ has the $({\mathscr K}(X),
  {\mathscr K}(X))$-moving off property. Note that $(X,d)$ has
  the $({\mathscr K}(X), {\mathscr K}(X))$-moving off property
  if and only if it has the $(\bigcup_{\delta>0}{\mathscr K}
  (X)^\delta, {\mathscr K}(X))$-moving off property. In the light
  of Theorem \ref{thm:baire} and Proposition \ref{prop:movingoff},
  we conclude this paper with the following open question.

  \begin{question}
  Let $(X,d)$ be a metric space and let $\mathscr B$ be a bornology
  on $X$ with a closed base. If $(X,d)$ has the $(\bigcup_{\delta>0}
  {\mathscr B}^\delta, \mathscr B)$-moving off property, must
  $(C(X), {\mathscr T}_{\mathscr B}^s)$ be a Baire space?
  \end{question}

  \bigskip
  \centerline{\sc Acknowledgement}

  \bigskip
  The paper was written while the first author was Visiting
  Professor at the Universidade de S\~{a}o Paulo, financially
  supported by the Funda\c{c}\~{a}o de Amparo \`{a} Pesquisa do
  Estado de S\~{a}o Paulo under the grant number FAPESP 2012/25401-5,
  when he was on sabbatical leave in May 2013. He would
  like to express his gratitude to the Institute of Mathematics and
  Statistics at the University of S\~{a}o Paulo for the hospitality.
  The second author's research was supported by the grants Auxilio
  Regular PROC FAPESP 2012/01490-9 and Produtividade em Pesquisa
  CNPq 305612/2010-7.

  \bigskip

  \end{document}